\newtheorem{definition}{Definition}
\newtheorem{lemma}[definition]{Lemma}
\newtheorem{corollary}[definition]{Corollary}
\newtheorem{remark}[definition]{Remark}
\newtheorem{proposition}[definition]{Proposition}
\newcommand{\R}{\mathbb{R}}
\newcommand{\N}{\mathbb{N}}
\newcommand{\set}[1]{\mathcal{#1}}
\newcommand{\mat}[1]{\mathbf{#1}}
\def \prob {\mathbb{P}}
\newcommand{\ep}{\varepsilon}
\title{Optimizing the stable behavior of parameter-dependent dynamical systems --- maximal domains of attraction, minimal absorption times}
\author{P\'eter~Koltai\thanks{M3, Faculty for Mathematics, Technische Universit\"at M\"unchen, Boltzmannstr.\ 3, 85748 Garching. E-mail: koltai{@}ma.tum.de} \and Alexander~Volf\thanks{Research Group CAPS - Computer Aided Plastic Surgery, Clinic for Plastic Surgery and Hand Surgery, Klinikum rechts der Isar, Technische Universit\"at M\"unchen, Ismaninger Str. 22, D-81675 M\"unchen, Germany.  E-mail: \newline alexander.volf{@}mytum.de} \thanks{CADFEM GmbH, Marktplatz 2, D-85567 Grafing b. M\"unchen, Germany. E-mail: avolf{@}cadfem.de}}
\date{}
\begin{document}

\maketitle

\begin{abstract}
We propose a method for approximating solutions to optimization problems involving the \textit{global} stability properties of parameter-dependent continuous-time autonomous dynamical systems. The method relies on an approximation of the infinite-state deterministic system by a finite-state non-deterministic one --- a Markov jump process. The key properties of the method are that it does not use any trajectory simulation, and that the parameters and objective function are in a simple (and except for a system of linear equations) explicit relationship.
\end{abstract}

\section{Introduction}

\paragraph{The problem.}
In numerous applications one is faced with the problem of steering a continuous time dynamical system which depends on some fixed (control) parameters into a target region. The word \textit{fixed} refers to the case where these parameters can be set at the beginning, but they cannot be changed once the system is ``running''.

Not only the asymptotic behavior (i.e.\ entering the target region) is of interest. Dealing with the nonlinear behavior of the system outside the target region is also important, it is desirable to achieve some kind of optimal stability properties; e.g.\ the domain of attraction\footnote{Also called the \textit{basin of attraction}, the \textit{region of asymptotic stability}, etc.}~(DOA) of the target region should be maximal, or the average time the system needs to reach the region should be minimal.

The literature extensively studies domains of attraction and their numerical computation~(cf.\ below), there are even approaches aiming at the \textit{local} enlargement of these objects~\cite{DaKu71, ShSt75, Liaw06}. It seems however, that little to no effort has been done in considering this task as an optimization problem, stated above. The current work proposes a method tackling this particular problem, and acting thereby \textit{globally}.

\paragraph{Possible tools and previous work.}

Numerical methods computing the DOA of a particular system can be roughly divided into three major groups.
\begin{itemize}
	\item The first are methods using direct simulation (forward or backward) of the underlying ordinary differential equation~\cite{GeTaVi85, Hsu80, HsGu80, FlGu88, Gru01}, in order to obtain a set covering the DOA, or some approximation on its boundary.
	\item The second are Lyapunov function based techniques, e.g.~\cite{LaLe61,Gie09} or Zubov's method~\cite{Zub64} (see also in~\cite{Hahn67}). Usually they allow one to extract a subset of the DOA as a level set of a function solving some partial differential equation, and no trajectories have to be simulated.
	\item The third group consists of probabilistic approaches, with trajectory simulation~\cite{Gol04}, or without~\cite{Kol10}. Absorption probabilities for an approximate non-deterministic system reveal properties of the DOA. Probabilistic approaches for solving stochastic control problems have been studied by Kushner et al.~\cite{Kush77,KuDu01}.
\end{itemize}
In order to view the stability behavior of the system as an optimization problem, we choose a simple setting: let a real valued objective function be given, which depends on the stability properties of the system in a desired way; such a function could be e.g.\ the Lebesgue volume of the DOA. To make the optimization more convenient to carry out, we assume the dependence of this objective function on the control parameter to be sufficiently smooth (at least differentiable), that we can use a simple gradient method to get to a (local) maximum/minimum.

Along the lines of the above example (maximization of the volume of the DOA) we would like to highlight some difficulties arising if we try to use one of the above methods for optimization by the gradient method.\\
   For methods using direct simulation, computing the gradient of the objective function will require by the chain rule the computation of derivatives of the flow with respect to (w.r.t.) the parameters. Thus, variational equations for the underlying ordinary differential equation (ODE) have to be solved. These are computationally expensive, since they are in general higher dimensional than the original ODE\footnote{They are, in fact, an ODE for the so-called \textit{sensitivity matrix w.r.t.\ the parameters}. The dimension of the sensitivity matrix is the state space dimension times the parameter space dimension.}, and they also require the derivative of the vector field (which also has to be computed numerically).\\
   The Lyapunov function based techniques deliver only the boundary of the DOA as level set of some scalar function. It is not clear how to obtain the derivative of the level set w.r.t.\ the parameters, and if there is a way to devise an efficient method based on this. Further, the most Lyapunov function based techniques deliver merely a subset of the DOA which' topology depends on the characteristics of the chosen Lyapunov function class (e.g., the level sets of quadratic functions are always ellipsoids).

The considerations above should give the reader a first insight into what one has to deal with when trying to solve an optimization problem associated with the DOA. Nevertheless, the main purpose of this work is not the comparison of the different approaches one could use for optimizing the objective function, rather to analyze and show the feasibility of the method we chose.


%
%
%

\paragraph{The approach.}

Our method is based on a set-oriented approach to approximate properties of dynamical systems~\cite{DeHo97, DeJu98}. We partition the state space~$\set{X}$ into disjoint sets, which serve to discretize the original deterministic dynamical system, and relate it to a finite state non-deterministic system --- a Markov jump process. Then we construct the infinitesimal generator (matrix) of the latter by integrating the vector field of the ODE on the boundary of the partition elements. From this matrix we compute absorption probabilities for the finite state system, which yield the desired information about the DOA; or we compute expected absorption times, which serve as approximations of the absorption times of the original system. For example, the volume of the DOA is simply the integral over the absorption probability function associated with the original deterministic system.\footnote{Since this is 1 if a given point is contained in the DOA, and 0 otherwise.}

All of the involved computations are explicit (except for the solution of a system of linear equations), and it is easy to obtain their derivatives. Composing these derivatives according to the chain rule yields the derivative of the objective function w.r.t.\ the parameters, and allows the application of local optimization methods, e.g.\ the gradient method.

\paragraph{Outline.}
This work is structured as follows. Section~\ref{sec:background} collects the mathematical background on deterministic and non-deterministic dynamical systems which are going to be used afterwards. In Section~\ref{sec:discretization} we introduce the discretization we use to make a finite-state non-deterministic system from the original infinite-state deterministic one. Section~\ref{sec:altMeasures} is an outlook: it collects possible alternatives for the approximation of the absorption times of the deterministic system by quantities derived from the non-deterministic one. Section~\ref{sec:application} is the main part of this work stating the optimization problem and showing how we propose to solve it. Numerical examples follow in Section~\ref{sec:numexp}, and some concluding remarks are collected in Section~\ref{sec:conclusions}.

\section{Background}	\label{sec:background}

\subsection{Dynamical systems}	\label{ssec:DS}

The temporal variation of states $x(t)\in\R^d$ we are interested in is given by an (autonomous) ODE ${\dot x = v(x)}$, where ${v:\R^d\to\R^d}$, and ${\dot x := \tfrac{dx}{dt}}$ denotes the temporal derivative. Let $\phi^t$ denote the associated flow, i.e.\ ${x(t) = \phi^tx(0)}$. The set $\{\phi^tx\}_{t\ge0}$ is called the (forward) trajectory (of $x$).

We are only looking at a compact subset $\set{X}$ of the whole state space $\R^d$. If a trajectory leaves $\set{X}$ it immediately terminates; in this case we write $\phi^t x = \omega$, where $\omega$ is a fictive state representing everything outside $\set{X}$.\\
We call the pair $(\set{X},\phi^t)$ a dynamical system, and we refer to $\set{X}$ as state space. Later, we will focus on systems for which some states shall be steered into a \textit{target region} $\set{T}\subset \set{X}$ in a prescribed way. Following objects will be at the center of our attention.
\begin{definition}
For the target region $\set{T}\subset \set{X}$ we call the set
\[
\set{D}:=\left\{x\in \set{X}\,\big\vert\, \phi^tx\in \set{T}\text{ for a }t\ge0\text{ and }\phi^sx\in \set{X}\ \forall s\in[0,t]\right\}
\]
the \textit{domain of attraction} of $\set{T}$.\\
The quantity 
\[
\tau(x) := \left\{\begin{array}{ll}
									\inf\left\{t\ge0\,\big\vert\,\phi^tx\in \set{T}\right\}, & x\in \set{D},\\
									\infty, & \text{elsewise}.	
							 \end{array}\right.
\]
is called the \textit{absorption time} of $x$.
\end{definition}
Note that we are only interested in trajectories which stay all the time in~$\set{X}$, until being absorbed in~$\set{T}$.

\subsection{Markov chains}	\label{ssec:Markov}

We use the theory of Markov chains to describe non-deterministic dynamical behavior. From now on in this section let $\set{Y}=\{1,2,\ldots,n\}$ be a discrete state space. We also work with a given probability space $(\Omega,\mathcal{F},\prob)$.
\begin{definition}[Stochastic process]
Let $\mathcal{I} = \N$ or $\mathcal{I}=\R_{\ge0}$, and let $\set{Y}$ be the set of possible states. Then a family $\{Z^t\}_{t\in\mathcal{I}}$, where $Z^t:\Omega\to \set{Y}$ for all $t\in\mathcal{I}$, of $\set{Y}$-valued random variables is called a stochastic process.
\end{definition}
\begin{definition}[Discrete time Markov chain]	\label{D:discrMC}
We call a stochastic process $\{Z^t\}_{t\in \set{T}}$ a \textit{discrete time Markov chain}, if $\mathcal{I}=\N$, and $\mat{P}\in\R^{n\times n}$ with (i) $\mat{P}_{ij}\ge0$ and (ii) $\sum_{i=1}^n\mat{P}_{ij}\le 1$, describes the transition probabilities, i.e.\
\[
\mat{P}_{ji} = \prob\left(Z^{t+1}=j\,\big\vert\, Z^t=i\right) := \prob\left(\left\{Z^{t+1}=j\text{ provided }Z^t=i\right\}\right),
\]
for all $t\in\mathcal{I}$ and $i,j\in \set{Y}$. The matrix $\mat{P}$ is called the transition matrix. A matrix satisfying (i) and (ii) is called sub-stochastic. If (ii) holds with equality, $\mat{P}$ is called stochastic.
\end{definition}
\begin{definition}[Continuous time Markov chain]	 \label{D:contMC}
Let $\mathcal{I}=\R_{\ge0}$. Further, let $\mat{G}\in\R^{n\times n}$ be such that
\begin{itemize}
	\item[(i)] $\mat{G}_{ij}\ge 0$ for $i\neq j$, and
	\item[(ii)] $\sum_{i=1}^n \mat{G}_{ij}\le 0$.
\end{itemize}
Define $\mat{P}^t = e^{t\mat{G}} = \sum_{k=0}^{\infty}\tfrac{t^k\mat{G}^k}{k!}$. Then $\mat{P}^t$ is a sub-stochastic matrix for every $t\ge 0$.\footnote{For a proof, see Theorem~2.1.2 in~\cite{Nor97}.} A process $\{Z^t\}_{t\in\mathcal{I}}$ with
\[
\mat{P}^t_{ji} = \prob(Z^{t+s}=j\,\vert\, Z^s=i)\quad\text{for all }t,s\ge 0,\text{ and }i,j\in \set{Y}
\]
is called a \textit{continuous time Markov chain}. It is also often called a Markov jump process (MJP). The matrix $\mat{G}$ is called the (infinitesimal) \textit{generator} of the process.
\end{definition}

One may think of such MJPs as follows (cf.\ Section~2.6 in~\cite{Nor97}). Being in an arbitrary state $i$ at an arbitrary time $s$, the process remains at the given state until some random time $s+t$, when it jumps, independently of $t$, at random to another state $j$. The jump time $t$ has exponential distribution with parameter $\mat{G}_{ii}$, and the probability of jumping to state $j$ ($j\neq i$) is $-\mat{G}_{ji}/\mat{G}_{ii}$, unless $\mat{G}_{ii}=0$, in which case the process does not leave the state $i$ ever.

Consider now a discrete time Markov chain with transition matrix $\mat{P}$. If $\mat{P}$ is sub-stochastic but not stochastic, there is an $i\in \set{Y}$ such that $\sum_{j\in \set{Y}}\mat{P}_{ji}=p_i<1$. In other words, if the process is currently in state $i$, there is a positive probability $1-p_i$ that it will not end up in $\set{Y}$ in the next step --- the process terminates. Analogous considerations can be done with a continuous time process. Its generator will satisfy $\sum_{j\in \set{Y}}\mat{G}_{ji}<0$ for some $i\in \set{Y}$. We call such processes \textit{leaky}. They reflect the restriction of our interest to the state space: everything leaving the state space is considered to be lost.

In the following we denote a Markov chain already terminated at time $t$ by $Z^t=\omega$. Here $\omega$ represents ``everything outside of the state space'' --- just as in the deterministic setting, Section~\ref{ssec:DS}.

\subsection{Absorption probabilities and expected absorption times}	\label{ssec:absProb}

Throughout this section we consider MJPs. Furthermore, we assume that the set $\set{T}\subset \set{Y}$ is absorbing (i.e.\ ${\prob(Z^t\in \set{T}\,\vert\,Z^0\in \set{T})=1}$), and that the process is possibly leaky. Then, under a reachability assumption, the process will either end in the absorbing set $\set{T}$, or ``leak out'' (i.e.\ terminate in the fictive state $\omega$; cf.\ above).\footnote{This reachability assumption is that there is a finite time $t>0$ such that $\prob\left(Z^t\in \set{T}\cup\{\omega\}\right)>0$ for any starting state. It implies that all the states $\set{Y}\setminus \set{T}$ are transient; see~\cite{Nor97} Theorem~3.4.2 p.115.} Let $p\in[0,1]^n$ denote the \textit{absorption probabilities} for the absorbing set, i.e.
\[
p_i = \prob\left(\left\{Z^t \in \set{T} \text{ for some }t\ge 0, \text{ provided }Z^0=i\right\}\right).
\]
The corresponding absorption time
\[
A = \inf\left\{t\,\big\vert\, Z^t\in\set{T}\right\}
\]
is a random variable itself; here we are interested in its expectation. The \textit{expected absorption times} are defined as
\[
a_i = \mathbb{E}_i(A) := \mathbb{E}\left(A\,\big\vert\,Z^0=i\right).
\]
An important quantity will be the \textit{expected termination time}. The termination time
\[
T = \inf\left\{t\,\big\vert\, Z^t\in\set{T}\cup\{\omega\}\right\}
\]
is a random variable measuring the time to termination either in the state $\omega$ or in the absorbing set $\set{T}$. We define
\[
t_i = \mathbb{E}_i(T).
\]
We wish to compute $p$, $a$ and $t$ from the generator $\mat{G}$ of the process.

Set $\widehat{\mat{G}}:= \mat{G}_{\set{Y}\setminus \set{T}, \set{Y}\setminus \set{T}}$; i.e.\ $\widehat{\mat{G}}$ is the matrix with the jump rates from $\set{Y}\setminus \set{T}$ to $\set{Y}\setminus \set{T}$. The transiency of the set $\set{Y}\setminus \set{T}$ is equivalent with $\lim_{t\to\infty}e^{t\widehat{\mat{G}}}=0$.
\begin{proposition}[\cite{Nor97} Theorem 3.3.1]	\label{prop:absprob}
Assume $\lim_{t\to\infty}e^{t\widehat{\mat{G}}}=0$. Then the absorption probabilities are the unique solution of
\begin{equation}
\begin{array}{rcll}
	\sum_{j\in \set{Y}\setminus \set{T}} p_j\mat{G}_{ji} & = & -\sum_{j\in \set{T}}\mat{G}_{ji}, & i\in \set{Y}\setminus \set{T},\\
	p_i & = & 1, & i\in \set{T}.
\end{array}
\label{eq:absprob}
\end{equation}
\end{proposition}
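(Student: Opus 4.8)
The plan is to reduce the continuous-time question to a discrete one by passing to the embedded jump chain, read the linear system off a first-step analysis, and then use the transiency hypothesis to upgrade ``a solution'' to ``the unique solution'' by plain linear algebra. The guiding observation is that whether, and where, the process is absorbed is decided entirely by the \emph{sequence} of states it visits and not by how long it waits in each of them; the holding times are therefore irrelevant for $p$, and we may work with the jump chain.

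First I would record that under the hypothesis $\lim_{t\to\infty}e^{t\widehat{\mat{G}}}=\mat{0}$ every transient state $i\in\set{Y}\setminus\set{T}$ satisfies $\mat{G}_{ii}<0$: if $\mat{G}_{ii}=0$, then since $\mat{G}_{ji}\ge0$ for $j\ne i$ and $\sum_{j}\mat{G}_{ji}\le0$ the whole $i$-th column of $\mat{G}$ vanishes, so the process never leaves $i$ and $i$ cannot be transient. Hence the jump chain on $\set{Y}\cup\{\omega\}$ is well defined, with transition probabilities $\Pi_{ji}=-\mat{G}_{ji}/\mat{G}_{ii}$ to a state $j\ne i$ and the complementary mass $-\bigl(\sum_{j\in\set{Y}}\mat{G}_{ji}\bigr)/\mat{G}_{ii}\ge 0$ leaking to $\omega$, and $p$ is exactly its absorption probability for $\set{T}$. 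Conditioning on the first jump out of a transient state $i$ (licensed by the strong Markov property at the jump times) gives $p_i=\sum_{j\ne i}\Pi_{ji}\,p_j$, where $p_j=1$ for $j\in\set{T}$ and $p_\omega=0$. Substituting $\Pi_{ji}=-\mat{G}_{ji}/\mat{G}_{ii}$ and multiplying through by $\mat{G}_{ii}$ gives $\sum_{j\in\set{Y}}\mat{G}_{ji}\,p_j=0$ (the leak contributes nothing, since $p_\omega=0$); splitting the sum according to $j\in\set{Y}\setminus\set{T}$ and $j\in\set{T}$ and inserting $p_j=1$ on $\set{T}$ produces exactly the first line of \eqref{eq:absprob}, while $p_i=1$ on $\set{T}$ holds by definition of an absorbing set.

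It then remains to prove uniqueness. Writing $\hat p=(p_j)_{j\in\set{Y}\setminus\set{T}}$, the transient equations of \eqref{eq:absprob} read $\widehat{\mat{G}}^{\mathsf{T}}\hat p=-b$ with $b_i=\sum_{j\in\set{T}}\mat{G}_{ji}$, so any two solutions differ by an element of $\ker\widehat{\mat{G}}^{\mathsf{T}}$. The hypothesis $\lim_{t\to\infty}e^{t\widehat{\mat{G}}}=\mat{0}$ forces every eigenvalue $\lambda$ of $\widehat{\mat{G}}$ to satisfy $\mathrm{Re}(\lambda)<0$; in particular $0\notin\sigma(\widehat{\mat{G}})$, so $\widehat{\mat{G}}$ is invertible and $\ker\widehat{\mat{G}}^{\mathsf{T}}=\{0\}$. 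Hence $\hat p$ is uniquely determined, which closes the argument.

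I expect the main obstacle to be the rigorous justification of the first-step conditioning: one must invoke the strong Markov property at the first jump time and check that, with a finite state space and the transiency hypothesis, there is no explosion and each transient state is left almost surely, so that the recursion $p_i=\sum_{j\ne i}\Pi_{ji}p_j$ genuinely holds rather than merely being a formal balance. The subtlety worth flagging is that satisfying the linear system is not by itself sufficient in general; it is precisely the transiency assumption, through the invertibility of $\widehat{\mat{G}}$, that rules out spurious extra solutions and pins $p$ down uniquely.
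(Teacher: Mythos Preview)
The paper does not supply its own proof of this proposition; it is stated with a citation to Norris~\cite{Nor97}, Theorem~3.3.1, and used as a black box. Your argument is correct and is essentially the standard one found in that reference: reduce to the embedded jump chain (which is legitimate since absorption depends only on the sequence of states visited), perform a first-step analysis to obtain the linear system, and invoke the spectral consequence of $e^{t\widehat{\mat{G}}}\to 0$ to conclude that $\widehat{\mat{G}}$ is invertible and hence the solution is unique. There is nothing to add or to compare against.
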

The main difference between the expected absorption times and termination times can be seen without computation. It holds $a_{\omega}=\infty$, but $t_{\omega}=0$. A representation of these quantities involving the generator has to respect this fact.
\begin{proposition}[\cite{Nor97} Theorem 3.3.3]	\label{prop:termtimes}
Assume $\lim_{t\to\infty}e^{t\widehat{\mat{G}}}=0$. Then the expected termination times are the unique solution of
\begin{equation}
\begin{array}{rcll}
	\sum_{j\in \set{Y}\setminus \set{T}} t_j\mat{G}_{ji} & = & -1, & i\in \set{Y}\setminus \set{T},\\
	t_i & = & 0, & i\in \set{T}\cup\{\omega\}.
\end{array}
\label{eq:termtimes}
\end{equation}
\end{proposition}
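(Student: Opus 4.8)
The plan is to verify the two claims separately: that the expected termination times $t$ solve the linear system~\eqref{eq:termtimes}, and that this system has a unique solution. The boundary conditions $t_i=0$ for $i\in\set{T}\cup\{\omega\}$ are immediate from the definition of $T$ --- starting already in $\set{T}$, or in the fictive terminated state $\omega$, gives termination time $0$, as was noted just before the statement. Thus the work concentrates on the index set $\set{Y}\setminus\set{T}$, where $\widehat{\mat{G}}$ is the relevant block of the generator and leaving $\set{Y}\setminus\set{T}$ --- either into $\set{T}$ or, by leaking, into $\omega$ --- is exactly what termination means.

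First I would establish the \emph{equations} by first-step analysis. Fix $i\in\set{Y}\setminus\set{T}$. By the description of an MJP given after Definition~\ref{D:contMC}, the process holds in $i$ for an exponential time of rate $-\mat{G}_{ii}$, hence of mean $1/(-\mat{G}_{ii})$, and then jumps to $j\neq i$ with probability $\mat{G}_{ji}/(-\mat{G}_{ii})$, or leaks to $\omega$ with the remaining probability $1+\sum_{j\neq i}\mat{G}_{ji}/\mat{G}_{ii}$. Conditioning on this first jump and invoking the strong Markov property, the expected termination time decomposes into the mean holding time plus the expected remaining termination time, where the latter contributes $t_j$ only when the jump lands in $\set{Y}\setminus\set{T}$ (landing in $\set{T}$ or leaking to $\omega$ contributes $0$). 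This yields
\[
t_i \;=\; \frac{1}{-\mat{G}_{ii}} \;+\; \sum_{\substack{j\in\set{Y}\setminus\set{T}\\ j\neq i}} \frac{\mat{G}_{ji}}{-\mat{G}_{ii}}\,t_j,
\]
and multiplying through by $-\mat{G}_{ii}$ and rearranging so that the $j=i$ term $\mat{G}_{ii}t_i$ joins the sum reproduces exactly $\sum_{j\in\set{Y}\setminus\set{T}} t_j\mat{G}_{ji} = -1$.

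The point at which this manipulation needs justification --- and the main obstacle --- is that first-step analysis with $\mathbb{E}_i(T)$ is only legitimate once one knows $t_i<\infty$; this is precisely where the hypothesis $\lim_{t\to\infty}e^{t\widehat{\mat{G}}}=0$ enters, and it is also what makes the termination time (unlike the absorption time, which may be $+\infty$ with positive probability when the process leaks) amenable to a finite linear-algebraic description. I would argue as follows. Since $\widehat{\mat{G}}$ is a finite matrix with $e^{t\widehat{\mat{G}}}\to 0$, its spectrum lies in the open left half-plane; in particular $\widehat{\mat{G}}$ is invertible and $\int_0^\infty e^{s\widehat{\mat{G}}}\,ds=-\widehat{\mat{G}}^{-1}$ converges (exponentially). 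Because leaving $\set{Y}\setminus\set{T}$ is termination, the survival function is the column sum $\prob_i(T>s)=\sum_{j\in\set{Y}\setminus\set{T}}(e^{s\widehat{\mat{G}}})_{ji}$, so by the tail formula for nonnegative random variables
\[
t_i \;=\; \int_0^\infty \prob_i(T>s)\,ds \;=\; \sum_{j\in\set{Y}\setminus\set{T}}\Bigl(\textstyle\int_0^\infty e^{s\widehat{\mat{G}}}\,ds\Bigr)_{ji} \;=\; -\sum_{j\in\set{Y}\setminus\set{T}}\bigl(\widehat{\mat{G}}^{-1}\bigr)_{ji}\;<\;\infty.
\]
This both retroactively validates the first-step computation and shows that the restriction of $t$ to $\set{Y}\setminus\set{T}$ equals $-(\widehat{\mat{G}}^{\top})^{-1}\mathbf{1}$, which already solves the square system $\widehat{\mat{G}}^{\top}t=-\mathbf{1}$, i.e.\ the matrix form of~\eqref{eq:termtimes} on $\set{Y}\setminus\set{T}$.

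Finally, uniqueness follows from the same invertibility: since $\widehat{\mat{G}}^{\top}$ is invertible, the solution of $\widehat{\mat{G}}^{\top}t=-\mathbf{1}$ is unique, while the values on $\set{T}\cup\{\omega\}$ are fixed by the boundary conditions. I expect the only delicate bookkeeping to be the careful accounting of the leak --- tracking that the ``missing'' jump probability corresponds to termination in $\omega$ and therefore contributes nothing to the remaining expected time --- together with the standard but non-trivial identification of the sub-stochastic semigroup $e^{s\widehat{\mat{G}}}$ with the transition probabilities of the process killed on leaving $\set{Y}\setminus\set{T}$.
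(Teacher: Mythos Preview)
Your proof is correct. The paper itself does not supply a proof of this proposition; it simply cites Norris~\cite{Nor97}, Theorem~3.3.3, and takes the result as given. Your argument --- first-step analysis for the equations, together with the observation that $e^{t\widehat{\mat{G}}}\to 0$ forces the spectrum of the finite matrix $\widehat{\mat{G}}$ into the open left half-plane, so that $\widehat{\mat{G}}$ is invertible, the tail integral $\int_0^\infty e^{s\widehat{\mat{G}}}\,ds=-\widehat{\mat{G}}^{-1}$ converges, the $t_i$ are finite, and the linear system has a unique solution --- is precisely the standard proof one finds in Norris, adapted to the sub-stochastic (leaky) setting where termination in $\omega$ is handled as absorption with zero residual time.
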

Along the lines of the proof of Proposition~\ref{prop:termtimes} we obtain
\begin{corollary}	\label{cor:abstimes}
Assume $\lim_{t\to\infty}e^{t\widehat{\mat{G}}}=0$. Then the expected absorption times are the unique solution of
\begin{equation}
\begin{array}{rcll}
	\sum_{j\in \set{Y}\setminus \set{T}\cup\{\omega\}} a_j\mat{G}_{ji} & = & -1, & i\in \set{Y}\setminus \set{T},\\
	a_i & = & 0, & i\in \set{T},\\
	a_i & = & \infty, & i=\omega,
\end{array}
\label{eq:abstimes}
\end{equation}
where $\mat{G}_{\omega i}:= -\mat{G}_{ii}-\sum_{j\in \set{Y}}\mat{G}_{ji}$, the termination rate from state $i$.
\end{corollary}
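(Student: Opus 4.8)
The plan is to reproduce the first-step analysis that underlies Proposition~\ref{prop:termtimes}, changing only the role of the fictive state~$\omega$. For the termination time the absorbing target is $\set{T}\cup\{\omega\}$, so $t_\omega=0$ and the $\omega$-contribution disappears from the generator equation; for the absorption time the target is only $\set{T}$, and since the chain can never enter $\set{T}$ once it has leaked, $\omega$ must be retained as a non-target state with $a_\omega=\infty$. This single change is exactly why the summation index in~\eqref{eq:abstimes} runs over $(\set{Y}\setminus\set{T})\cup\{\omega\}$ instead of $\set{Y}\setminus\set{T}$.

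First I would derive the equation by conditioning on the first jump. Fix $i\in\set{Y}\setminus\set{T}$; by the strong Markov property the chain holds in~$i$ for an $\mathrm{Exp}(-\mat{G}_{ii})$ time of mean $-1/\mat{G}_{ii}$ and then moves to $j\neq i$ with probability $-\mat{G}_{ji}/\mat{G}_{ii}$, or leaks to $\omega$ with probability $\mat{G}_{\omega i}/(-\mat{G}_{ii})$. Hence
\[
a_i \;=\; \frac{-1}{\mat{G}_{ii}} \;+\; \sum_{j\in\set{Y},\,j\neq i}\frac{-\mat{G}_{ji}}{\mat{G}_{ii}}\,a_j \;+\; \frac{\mat{G}_{\omega i}}{-\mat{G}_{ii}}\,a_\omega ,
\]
and multiplying by $-\mat{G}_{ii}$ and using $a_j=0$ for $j\in\set{T}$ would give precisely the first line of~\eqref{eq:abstimes}. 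Throughout, arithmetic is read in $[0,\infty]$ with $c\cdot\infty=\infty$ for $c>0$ and $0\cdot\infty=0$, so that a positive leak rate $\mat{G}_{\omega i}>0$ forces $a_i=\infty$, in agreement with $a_\omega=\infty$.

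For uniqueness under $\lim_{t\to\infty}e^{t\widehat{\mat{G}}}=0$ I would split $\set{Y}\setminus\set{T}$ using the absorption probabilities of Proposition~\ref{prop:absprob}: set $S=\{i\in\set{Y}\setminus\set{T}:p_i=1\}$ and $S'=\{i:p_i<1\}$. On $S'$ there is positive probability of leaking before reaching $\set{T}$, so $A=\infty$ on an event of positive probability and $a_i=\infty$, and these entries are forced by the $[0,\infty]$-reading of the system. The set $S$ is closed for the jump chain --- a jump from $S$ into $S'$ or into $\omega$ would create a positive leak probability, contradicting $p_i=1$ --- so on $S\cup\set{T}$ the process never leaks, absorption and termination times coincide, and the restriction of~\eqref{eq:abstimes} to $S$ is the finite linear system of Proposition~\ref{prop:termtimes} with coefficient matrix the principal block $\mat{G}_{S,S}$ of $\widehat{\mat{G}}$. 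Closedness of $S$ makes $\widehat{\mat{G}}$ block-triangular, so the spectrum of $\mat{G}_{S,S}$ lies in that of $\widehat{\mat{G}}$, hence in the open left half-plane; thus $\mat{G}_{S,S}$ is nonsingular and the finite solution on $S$ is unique.

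The step I expect to be the main obstacle is the bookkeeping around the infinite entries: one must give the $[0,\infty]$-valued system a precise meaning, verify that the genuine expected absorption times solve it, and show that every solution carries $\infty$ exactly on $S'$ and finite values on $S$. The cleanest route is to prove $\{i:a_i<\infty\}=\{i:p_i=1\}$ first --- using that $p_i<1$ makes $\mathbb{E}_i(A)=\infty$ --- and only afterwards to invoke Proposition~\ref{prop:termtimes} on the leak-free, closed sub-chain supported on $S$, where every quantity is finite and the reasoning is standard.
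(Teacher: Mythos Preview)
Your proposal is correct and follows exactly the route the paper intends: the paper does not spell out a proof at all but simply says the corollary is obtained ``along the lines of the proof of Proposition~\ref{prop:termtimes}'', i.e.\ the first-step (first-jump) analysis from Norris, with the only change being that $\omega$ is now a non-target state carrying $a_\omega=\infty$. Your derivation of the generator equation by conditioning on the first jump, and your uniqueness argument via the split $S=\{p_i=1\}$, $S'=\{p_i<1\}$ and the identification $a=t$ on $S$, are precisely the details the paper leaves to the reader; you have in fact written out considerably more than the paper does.
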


As one can see, the quantities $p$, $a$ and $t$ can be computed by solving systems of linear equations.

There is a problem with $a$ which will pose us difficulties when computing absorption times from a discretization of the system. Starting at $i$, if there is a non-zero probability (does not matter how small) of terminating in $\omega$ at some future time, the expected absorption time will be infinite. Even if these non-zero probabilities come from discretization errors, it makes the resulting absorption times not applicable for the approximation of absorption times of the original system.\\
Note that $a_{\{p=1\}} = t_{\{p=1\}}$, i.e.\ $a$ and $t$ coincide for the states which have absorption probability one; clearly because these states terminate in~$\set{T}$.

Alternative measures of absorption times are introduced in Section~\ref{sec:altMeasures} below.

\section{Discretization}	\label{sec:discretization}

The desired objects of the deterministic system defined by $\dot x=v(x)$ on $\set{X}$ will be computed from the discretization given below.

Partition the state space $\set{X}$ into finitely many disjoint partition elements $\set{X}_1,\ldots,\set{X}_n$, where each set $\set{X}_i$ has a piecewise smooth boundary $\partial \set{X}_i$, such that the unit outer normal vector $n_j$ exists almost everywhere (measured by the $d-1$ dimensional Lebesgue measure $m_{d-1}$ on $\partial \set{X}_i$). Usually, the $\set{X}_i$ are rectangles or simplices.
\begin{definition}
The discrete generator matrix $\mat{G}_n\in\R^{n\times n}$ associated with the vector field $v$ is defined by
\begin{equation}
\renewcommand{\arraystretch}{1.2}
\mat{G}_{n,ij} = \left\{\begin{array}{ll}
										(1/m(\set{X}_j))\int_{\partial \set{X}_i\cap\partial \set{X}_j} \left(v(x)\cdot n_j(x)\right)^+\, dm_{d-1}(x), & i\neq j \\
										-(1/m(\set{X}_i))\int_{\partial \set{X}_i} \left(v(x)\cdot n_i(x)\right)^+\, dm_{d-1}(x), & i=j,
									\end{array}\right.
\label{eq:disc_gener}
\end{equation}
where $x\cdot y$ denotes the dot product of the vectors $x$ and $y$, and $f^+$ denotes the positive part of the function $f$.
\end{definition}
The discretization goes back to~\cite{FroJK10}, but apart from the probabilistic point of view, it is only the spatial discretization of the upwind method known from finite volume methods~\cite{Lev02}. A first application of this idea for the computation of the DOA appears in~\cite{Kol10}. It is shown in the latter work that $\mat{G}_n$ indeed generates a MJP on the state space with elements $\set{X}_1,\ldots,\set{X}_n$, this MJP can be associated with a non-deterministic system on $\set{X}$ which converges to $\phi^t$ in distribution as $n\to\infty$. This elucidates why we use absorption probabilities and expected absorption times of the MJP generated by $\mat{G}_n$ in order to approximate the DOA and absorption times of $\phi^t$.

Due to this convergence in distribution if one starts the MJP on a sufficiently fine discretization with a distribution highly concentrated around a state $x\in\set{D}$, the distribution of $Z_n^t$ is highly concentrated around $\phi^t x$. Now, if $t$ is chosen such that $\phi^t\in\set{T}$ (it is possible per definition of~$\set{T}$) then $Z_n^t$ is already absorbed with high probability. This tells us intuitively that for a sufficiently fine partition $p_i\approx 1$ if $\set{X}_i\subset\set{D}$; and similarly that the expected absorption times of~$Z_n^t$ approximate the absorption times of~$\phi^t$ well.\footnote{The reader may have already noticed that we are avoiding precise statements about the convergence of absorption probabilities and absorption/termination times. We decided to do so due to the fact that proofs require more advanced mathematical tools, which' introduction would highly complicate the presentation and distract the attention from the main purpose of this work. Precise statements and their proofs are subject of ongoing work, and will appear elsewhere.}

\begin{remark}
The consideration here should help to associate the system $\phi^t$ with the MJP generated by~$\mat{G}_n$. Let $Z$ be a random variable distributed uniformly in $\set{X}_i$. Then we have by Lemma~4.1 and Lemma~4.4 in~\cite{FroJK10}
\[
\frac{d}{dt}\prob\left(\phi^tZ\in \set{X}_j\right)\!\Big\vert_{t=0} = \mat{G}_{n,ji}.
\]
If now $\{Z_n^t\}_{t\ge0}$ is the MJP generated by $\mat{G}_n$, we also have from the definition that
\[
\frac{d}{dt}\prob\left(Z_n^t = \set{X}_j\,\big\vert\,Z_n^0 = \set{X}_i\right)\!\Big\vert_{t=0} = \mat{G}_{n,ji}.
\]
\end{remark}

Note that unless the partition elements $\set{X}_i$ and $\set{X}_j$ have a fully $d-1$ dimensional intersection $\partial \set{X}_i\cap\partial \set{X}_j$, it holds $\mat{G}_{n,ij}=0$. Thus, $\mat{G}_n$ is sparse.\\
Another numerical advantage of the discretization is that it does not use trajectory simulation --- only integrals of at least continuous functions (the $(v\cdot n_j)^+$) on $d-1$ dimensional domains have to be computed.

\noindent\begin{minipage}[b]{0.55\textwidth}
\begin{remark}[The ``wrapping effect'']
Having seen how the discretization works we can go back to the effect mentioned at the very end of Section~\ref{sec:background}. The figure to the right shows a trajectory of a stable system spiraling into one point, say, the origin. The stability is weak, since the decrease of the distance to the origin during one rotation is small. The rectangles in the figure indicate the elements~$\set{X}_i$ of the chosen partition. Computing the discrete generator~$\mat{G}_n$ of this system, one can see, that no matter in which partition element one starts in, there is always a positive probability to leak out of the state space. This effect decreases as the partition gets finer, it does not vanish completely however, so the absorption times~$a_n$ computed from~$\mat{G}_n$ will be infinity for all boxes except the target (per definition). To remedy this fact one can work with the termination times, or with other constructs; see Section~\ref{sec:altMeasures} below.
\end{remark}
\end{minipage}
\hfill
\begin{minipage}[b]{0.4\textwidth}
\begin{center}
\includegraphics[width=1.0\textwidth]{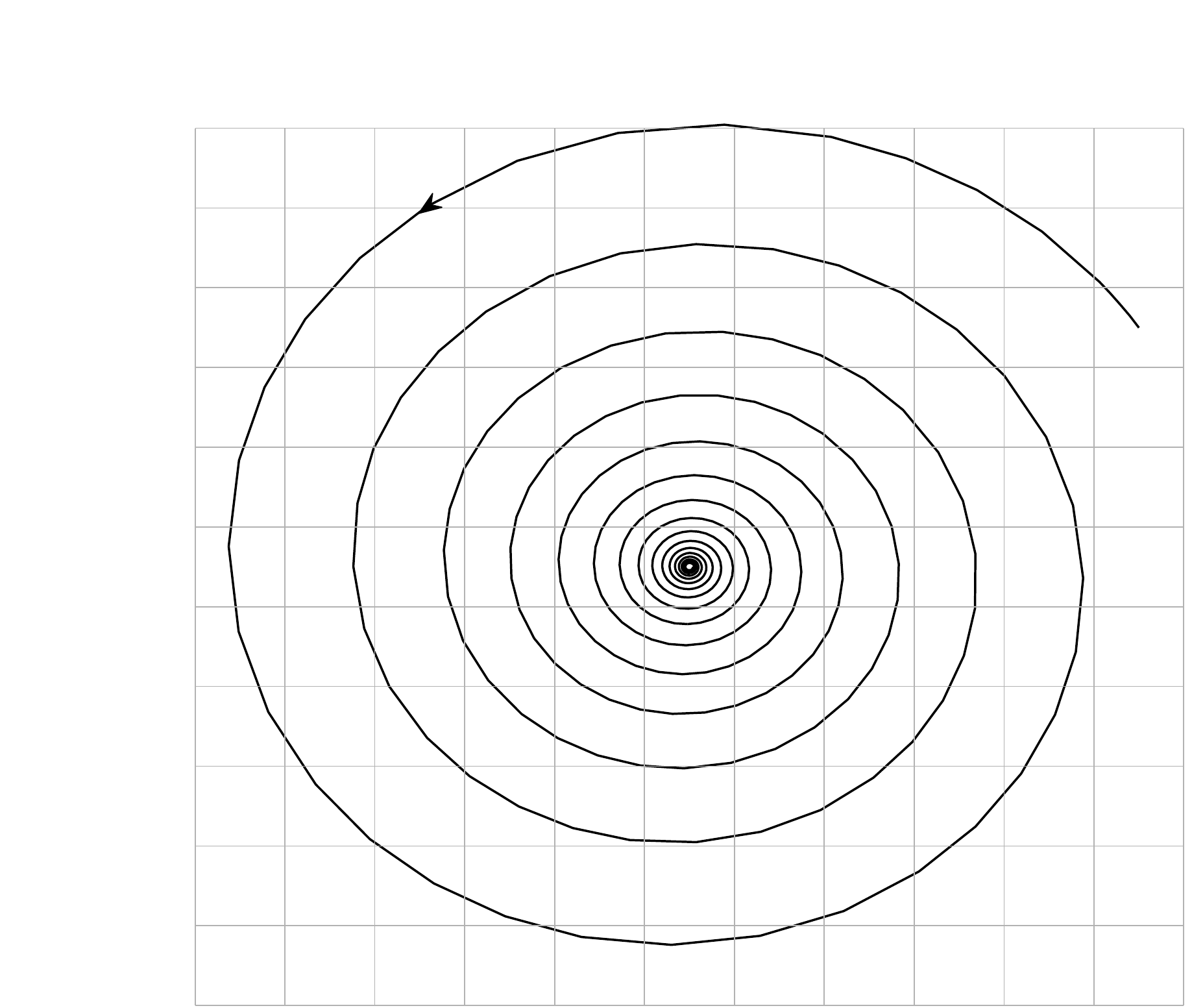}
\end{center}

\vspace{0.5pt}
The ``wrapping effect'': no matter how small the partition elements are, there is always a nonzero probability for the induced MJP to leak out.
\end{minipage}

\section{Alternative measures for the absorption time}	\label{sec:altMeasures}

It has been already discussed that the expected absorption times $a_n$ of the discrete generator~$\mat{G}_n$ are not the right tool to approximate the absorption times $\tau$ of the deterministic system. Also the approximation through the expected termination times $t_n$ may be highly defective; e.g.\ when the time to absorption (if absorption occurs) and the time to leak-out (if leaking out occurs) are way apart.

We would like to discuss here two alternative approximations to $\tau$ by some quantities derived from the discrete generator which only depend on absorption times; i.e.\ on the random variable~$A$. Both are solutions to a system of linear equations, thus their computation is not a harder task than that of the previously used quantities. Their incorporation into our approach presented below, and the analysis of their numerical properties are subject of future work.

\paragraph{Transformation.}

Let us recall that $A$ denotes the random variable of absorption times. Since \linebreak[4] ${\prob(A=\infty)>0}$, we define
\[
h_i = \mathbb{E}_i\left(e^{-A}\right).
\]
The advantage is that $h$ attains always finite values, even if the expected absorption time~$a$ doesn't. The transformation $\tau\mapsto 1-e^{-\tau}$ is called the Kru{\v z}kov transform. Note that $1-e^{-\tau}$ is continuous on the entire state space~$\set{X}$ if $\tau$ is continuous on $\set{D}$.

One can show that for a MJP with generator matrix $\mat{G}$ holds
\begin{equation}
\begin{array}{rcll}
	h_i & = & \sum_{j\in \set{Y}\setminus \set{T}} h_j\mat{G}_{ji} + \sum_{j\in\set{T}}\mat{G}_{ji}, & i\in \set{Y}\setminus \set{T},\\
	h_i & = & 1, & i\in \set{T}.
\end{array}
\label{eq:kruzkovtimes}
\end{equation}
If $h_n$ is the solution of~\eqref{eq:kruzkovtimes} corresponding to the discrete generator $\mat{G}_n$, we may use $-\log(h_n)$ to approximate $\tau$.

\paragraph{Absorption times conditioned to absorption.}

After finishing this work the following idea of P.\ Pollett and D.\ M.\ Walker has been pointed out to us.

If we expect a box to lie in the DOA but the corresponding absorption probability is smaller than~1, the expected absorption time conditioned to that absorption occurs in finite time~\cite{Walk98} is a very natural approximation to $\tau$. Hence, define
\[
a_i^* = \mathbb{E}_i\left(A\,\big\vert\, A<\infty\right)
\]
on
\[
\set{Y}^* = \left\{i\in\set{Y}\,\big\vert\, p_i>0\right\},
\]
where $p$ denotes the absorption probabilities. Then by Theorem~2~\cite{Walk98} we have
\begin{equation}
\begin{array}{rcll}
	\sum_{j\in \set{Y^*}\setminus \set{T}} a_j^*p_j\mat{G}_{ji} & = & -p_i, & i\in \set{Y}^*\setminus \set{T},\\
	a_i^* & = & 1, & i\in \set{T}.
\end{array}
\label{eq:condtimes}
\end{equation}
Since the magnitude of the $p_i$ may range over several orders of magnitude (in practice between machine precision to one), any numerical method using~\eqref{eq:condtimes} has to address stability issues.

\section{Application} \label{sec:application}

\subsection{Parameter dependent system and objective function}

The knowledge gained in the previous sections will be now applied to optimize the stable behavior of dynamical systems arising from the vector field $v(x;b)$, where $b\in\R^r$ is an adjustable parameter.

The quality of the dynamical behavior is going to be measured by an objective function $f:\R^r\to\R$ which will be subject to minimization or maximization w.r.t.\ $b$. This objective function will consist of two parts; one representing the desired behavior, and one representing the costs connected with the choice of the control parameter. Our two model problems are as follows.

\paragraph{Maximal DOA.}
The goal here is to obtain a subset of the state space with the biggest possible volume which is steered into the target region~$\set{T}$. Correspondingly, one would like to maximize the volume $m(\set{D}(b))$ of the set~$\set{D}(b)$, the DOA associated with~$v(\cdot;b)$. We define the objective function as
\begin{equation}
f(b) := m(\set{D}(b))-\alpha |b|^2,
\label{eq:maxDOA}
\end{equation}
with $\alpha>0$ being some penalty parameter, and $|b|^2:=\sum_{i=1}^rb_i^2$.

Observe that $m(\set{D}(b)) = \int_{\set{X}}\chi_{\set{D}(b)}(x)\,dx$, where $\chi_{\set{D}(b)}(x)$, the characteristic function of the set~$\set{D}(b)$, acts as a function of absorption probabilities for the deterministic system: if $x$ is steered into~$\set{T}$ then $\chi_{\set{D}(b)}(x)=1$, and 0 otherwise. Suppose a partition $\{\set{X}_1,\ldots,\set{X}_n\}$ satisfying the conditions of Section~\ref{sec:discretization} is given. By computing the absorption probabilities~$p_n$ from the discrete generator associated with $v(\cdot;b)$ for this partition, we obtain an approximation of the objective function:
\[
f(b) \approx f_n(b):= \sum_{i=1}^nm(\set{X}_i)p_{n,i} - \alpha|b|^2.
\]
Our strategy is to use this approximation in order to carry out the maximization.

\paragraph{Minimal absorption times.}
Here we have a prescribed set~$\set{D}_0\supset\set{T}$, and the goal is to minimize the average absorption time over this set. The objective function is defined as
\begin{equation}
f(b) := \int_{\set{D}_0}\tau(x;b)\,dx + \alpha|b|^2,
\label{eq:minTime}
\end{equation}
with some penalty parameter~$\alpha>0$; where $\tau(\cdot;b)$ is the function of absorption times associated with~$v(\cdot;b)$. The approximation of this objective function is going to be discussed in Section~\ref{ssec:optimTime} below.

\medskip
\begin{remark}[Other objective functions]	\label{rem:otherObjectives}
Depending on the needs of the application one can work with other objective functions as well. Here we present two of them which can be handled by our method. They are not going to be discussed further in the following.
\begin{enumerate}[(a)]
	\item If it is of higher importance that the DOA covers some specific regions, one could take some function $\theta:\set{X}\to\R$ to weight absorption probabili\-ties. The objective function
	\[
	f(b) = \int_{\set{D}(b)}\theta(x)\,dx - \alpha|b|^2
	\]
	has a clear representation in terms of absorption probabilities.
	\item Average absorption speed maximization over a domain~$\set{D}_0$ of interest:
	\[
	f(b) = \int_{\set{D}_0}\frac{|x|}{\tau(x)}\,dx - \alpha|b|^2.
	\]
\end{enumerate}
\end{remark}

\subsection{Iterative optimization}	\label{ssec:optim}

In this section we describe the optimization procedure for finding (approximate) local extrema of the objective function~$f$ by performing a local optimization of its approximation~$f_n$. For the demonstration we choose the problem of DOA maximization.

Suppose we have chosen a partition $\{\set{X}_1,\ldots,\set{X}_n\}$ of the state space $\set{X}$ such that the target region~$\set{T}$ is the union of some partition elements. Later on we will use the short-hand notation~$i\in\set{T}$ for~$\set{X}_i\subseteq\set{T}$. Set $f_n(b)=\sum_{i=1}^n m(\set{X}_i)p_{n,i} - \alpha|b|^2$.

Next we show that under fairly general conditions the objective function~$f_n$ is differentiable w.r.t.\ $b$. Then, by the gradient method
\[
b_{k+1} = b_k + Df_n(b_k),\qquad k=0,1,\ldots
\]
we can compute a local maximum of it. Here and in the following $Dg$ denotes the derivative of the function $g$. The value $f_n(b)$ is computed by the sequence of mappings
\[
b \mapsto v(\cdot;b) \mapsto \mat{G}_n \mapsto p_n \mapsto f_n(b).
\]
The differentiability of all these mappings would imply the differentiability of $f_n$, but milder conditions suffice as well. Let $\mat{G}_n(v)$ denote the discrete generator associated with~$v$. Since the number of partition elements, $n$, does not change throughout the computation, for simplicity we drop the subscript~$n$.

\medskip\noindent$\bullet$ $b\mapsto v$\\
A weaker assumption than the differentiability of $v(\cdot;b)$ w.r.t.~$b$ suffices here, see Remark~\ref{rem:zeroMeasureSet} below.

\medskip\noindent$\bullet$ $v\mapsto\mat{G}$
\begin{lemma}
Let~$v$ be a continuous and~$\delta v$ a bounded vector field from~$\set{X}$ to~$\R^d$. Further assume that ${m_{d-1}(\partial\set{X}_i\cap\{v\cdot n_i=0\})=0}$ for all~$i=1,\ldots,n$, where~$n_i$ denotes the outer normal on~$\partial\set{X}_i$. Then the directional derivative of~$\mat{G}(v)$ in the direction~$\delta v$,
\[
D\mat{G}(v)\cdot\delta v := \lim_{\ep\to 0}\frac{\mat{G}(v+\ep\delta v)-\mat{G}(v)}{\ep},
\]
is given by
\[
\renewcommand{\arraystretch}{1.5}
D\mat{G}(v)_{ij}\cdot\delta v = \left\{\begin{array}{ll}
																\frac{1}{m(\set{X}_j)}\int_{\set{X}_{ij}^+} \delta v(x)\cdot n_j(x)\,dm_{d-1}(x), & i\neq j\\
																-\frac{1}{m(\set{X}_i)}\int_{\set{X}_{ii}^+} \delta v(x)\cdot n_i(x)\,dm_{d-1}(x), & i=j.
																\end{array}\right.
\]
where $\set{X}_{ij}^+ = \partial\set{X}_i\cap\partial\set{X}_j\cap\{v\cdot n_j\ge0\}$.
\end{lemma}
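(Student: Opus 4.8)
The plan is to reduce the directional derivative to a pointwise statement about the positive-part function and then interchange differentiation and integration by dominated convergence. Each entry of $\mat{G}(v)$ is, up to the fixed normalizing factor $1/m(\set{X}_j)$ or $1/m(\set{X}_i)$, an integral of the form $\int_\Gamma (v\cdot n)^+\,dm_{d-1}$ over a fixed piecewise smooth surface $\Gamma$ (namely $\partial\set{X}_i\cap\partial\set{X}_j$ with $n=n_j$ for $i\neq j$, and $\partial\set{X}_i$ with $n=n_i$ for $i=j$). It therefore suffices to show
\[
\frac{d}{d\ep}\int_\Gamma \big((v+\ep\,\delta v)\cdot n\big)^+\,dm_{d-1}\Big\vert_{\ep=0} = \int_{\Gamma\cap\{v\cdot n\ge 0\}} \delta v\cdot n\,dm_{d-1}.
\]
The normalizing constants and the sign in the diagonal case are then carried along unchanged, yielding the asserted formula, since $\Gamma\cap\{v\cdot n\ge0\}$ is exactly $\set{X}_{ij}^+$ (and $\set{X}_{ii}^+$ in the diagonal case).

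First I would fix $x\in\Gamma$ and examine the scalar map $\psi_x(\ep):=\big(a+\ep b\big)^+$, where $a:=v(x)\cdot n(x)$ and $b:=\delta v(x)\cdot n(x)$. Away from the kink this differentiates easily: if $a>0$ then $(a+\ep b)^+=a+\ep b$ for small $\vert\ep\vert$, so $\psi_x'(0)=b$; if $a<0$ then $(a+\ep b)^+=0$ for small $\vert\ep\vert$, so $\psi_x'(0)=0$; only at $a=0$ does $\psi_x$ fail to be differentiable (unless $b=0$), where $\psi_x(\ep)=(\ep b)^+$ has unequal one-sided derivatives. Hence the pointwise limit of the difference quotient equals $\delta v\cdot n$ on $\{v\cdot n>0\}$ and $0$ on $\{v\cdot n<0\}$, and exists $m_{d-1}$-almost everywhere on $\Gamma$ precisely because the hypothesis $m_{d-1}(\partial\set{X}_i\cap\{v\cdot n_i=0\})=0$ forces the kink set $\Gamma\cap\{v\cdot n=0\}$ to be null.

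Next I would justify passing the derivative inside the integral. The positive-part function is $1$-Lipschitz, so
\[
\left\vert\frac{\psi_x(\ep)-\psi_x(0)}{\ep}\right\vert \le \vert b\vert = \vert\delta v(x)\cdot n(x)\vert \le \Vert\delta v\Vert_\infty ,
\]
using $\vert n(x)\vert=1$. Since $\delta v$ is bounded and $\Gamma$ is a piecewise smooth surface in the compact set $\set{X}$, hence of finite $m_{d-1}$-measure, the constant $\Vert\delta v\Vert_\infty$ is integrable over $\Gamma$, and the dominated convergence theorem permits interchanging $\lim_{\ep\to0}$ with $\int_\Gamma$. The almost-everywhere pointwise limit from the previous step then yields $\int_{\Gamma\cap\{v\cdot n>0\}}\delta v\cdot n\,dm_{d-1}$, and since the kink set is null this equals the integral over $\Gamma\cap\{v\cdot n\ge0\}$. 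Dividing by the appropriate volume concludes the computation.

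The main obstacle is the non-differentiability of $(\cdot)^+$ at the origin: pointwise, the integrand has a genuine kink wherever $v\cdot n=0$, and without control over this set the directional derivative need not exist. The measure-zero hypothesis is exactly what removes the contribution of the kink set, while the Lipschitz bound is what makes the interchange of limit and integral legitimate; together they turn an a priori nonsmooth expression into a clean surface integral. A minor point worth noting is that $v$ being merely continuous (rather than smooth) and $\delta v$ merely bounded is harmless, since only the scalar field $v\cdot n$, the null set where it vanishes, and the uniform bound on $\delta v\cdot n$ enter the argument.
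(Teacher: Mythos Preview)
Your argument is correct and follows essentially the same route as the paper's proof: both analyze the difference quotient by splitting according to the sign of $v\cdot n$, use the boundedness of $\delta v$ together with the $1$-Lipschitz property of $(\cdot)^+$ to control the error, and invoke the null-set hypothesis to dispose of the kink set. Your packaging via dominated convergence is arguably cleaner than the paper's direct $o(\ep)$ estimate (which appeals to uniform continuity of $v$), but the underlying mechanism is identical.
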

\begin{proof}
We have for $i\neq j$
\begin{eqnarray*}
D\mat{G}(v)_{ij}\cdot\delta v & = & \lim_{\ep\to0}\frac{1}{\ep}\left(\frac{1}{m(\set{X}_j)}\int_{\partial\set{X}_i\cap\partial\set{X}_j}\left(\left(v+\ep\delta v\right)\cdot n_j\right)^+\,dm_{d-1} - \int_{\partial\set{X}_i\cap\partial\set{X}_j}\left(v\cdot n_j\right)^+\,dm_{d-1}\right) \\
															& = & \lim_{\ep\to0}\frac{1}{\ep}\left(\frac{1}{m(\set{X}_j)}\int_{\set{X}_{ij}^+}\left(\left(v+\ep\delta v\right)\cdot n_j\right)^+\,dm_{d-1}+o(\ep) - \int_{\set{X}_{ij}^+}v\cdot n_j\,dm_{d-1}\right) \\
															& = & \frac{1}{m(\set{X}_j)}\int_{\set{X}_{ij}^+}\delta v\cdot n_j\,dm_{d-1},
\end{eqnarray*}
where $o(\ep)$ denotes a function $g(\ep)$ such that $g(\ep)/\ep\to 0$ as $\ep\to0$. The second equation follows from the uniform continuity of $v$ on compact sets and from the boundedness of $\delta v$, the third one from the condition $m_{d-1}(\partial\set{X}_i\cap\{v\cdot n_i=0\})=0$. The proof for $i=j$ is the same.
\end{proof}
\begin{remark} 	\label{rem:zeroMeasureSet}
The previous lemma still holds if $\delta v$ is only defined on $\set{X}\setminus\set{N}$, and the set~$\set{N}$ satisfies $m_{d-1}(\partial\set{X}_i\cap\set{N})=0$ for every $i$; i.e.\ there is no fully $d-1$ dimensional intersection between the boundaries of the partition elements and the set of points where $\delta v$ is not defined. The application we have in mind is the case where $v$ is continuous but only piecewise differentiable w.r.t.\ $b$. Such a case may arise if due to some technical limitation the control has a maximal amplitude; we will show this kind of an example later on.
\end{remark}

\smallskip\noindent$\bullet$ $\mat{G}\mapsto p$\\
Next we address the differentiability of $p$ w.r.t.\ $\mat{G}$. As in Section~\ref{ssec:absProb}, let $\widehat{\mat{G}}$ denote the part of~$\mat{G}$ with the jump rates between the~$\set{X}_i$ which are not contained in~$\set{T}$. Let $q_i=\sum_{j\in\set{T}}\mat{G}_{ji}$. If $\hat{p}$ denotes the absorption probabilities corresponding to partition elements contained in ${\set{X}\setminus\set{T}}$, Proposition~\ref{prop:absprob} shows that
\[
\hat{p} = -\widehat{\mat{G}}^{-T} q.
\]
Hence $p$ is arbitrarily smooth in the entries of $\mat{G}$ and we get by elementary calculus
\begin{lemma}
The directional derivative of $\hat{p}(\mat{G})$ in the direction $\delta\mat{G}$ is given by
\[
D\hat{p}(\mat{G})\cdot\delta\mat{G} = \widehat{\mat{G}}^{-T}\widehat{\delta\mat{G}}^{T}\widehat{\mat{G}}^{-T}q-\widehat{\mat{G}}^{-T}\delta q;
\]
and $Dp_i(\mat{G})=0$ for~$i\in\set{T}$.
\end{lemma}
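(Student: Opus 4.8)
The plan is to differentiate the closed-form expression $\hat{p}=-\widehat{\mat{G}}^{-T}q$ displayed just before the statement, treating it as a composition of elementary smooth maps of the matrix entries. This formula is read off directly from the linear system of Proposition~\ref{prop:absprob}: summing $p_j$ against $\mat{G}_{ji}$ over $j\in\set{Y}\setminus\set{T}$ gives the $i$-th entry of $\widehat{\mat{G}}^{T}\hat{p}$, so the system is $\widehat{\mat{G}}^{T}\hat{p}=-q$, whence $\hat{p}=-\widehat{\mat{G}}^{-T}q$. Since matrix inversion is smooth on the open set of invertible matrices (transiency guarantees $\widehat{\mat{G}}$ is invertible), the map $\mat{G}\mapsto\hat{p}$ is smooth on its domain, and its directional derivative follows from the product rule.

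First I would isolate the two sources of $\mat{G}$-dependence in $\hat{p}=-\widehat{\mat{G}}^{-T}q$: the inverse factor $\widehat{\mat{G}}^{-T}$ and the right-hand vector $q$, whose entries $q_i=\sum_{j\in\set{T}}\mat{G}_{ji}$ depend \emph{linearly} on $\mat{G}$. The derivative of the linear part is immediate, $Dq\cdot\delta\mat{G}=\delta q$ with $\delta q_i=\sum_{j\in\set{T}}\delta\mat{G}_{ji}$. For the inverse I would invoke the standard identity $D(A^{-1})\cdot\delta A=-A^{-1}(\delta A)A^{-1}$, applied to $A=\widehat{\mat{G}}^{T}$, giving $D(\widehat{\mat{G}}^{-T})\cdot\delta\mat{G}=-\widehat{\mat{G}}^{-T}\widehat{\delta\mat{G}}^{T}\widehat{\mat{G}}^{-T}$. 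Combining these via the product rule on $\hat{p}=-(\widehat{\mat{G}}^{-T})(q)$ yields
\[
D\hat{p}(\mat{G})\cdot\delta\mat{G}=\widehat{\mat{G}}^{-T}\widehat{\delta\mat{G}}^{T}\widehat{\mat{G}}^{-T}q-\widehat{\mat{G}}^{-T}\delta q,
\]
exactly the stated expression. The claim $Dp_i=0$ for $i\in\set{T}$ is simply the observation that on the absorbing set the boundary condition fixes $p_i\equiv 1$, a constant, so its derivative vanishes.

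The computation is genuinely routine once $\hat{p}=-\widehat{\mat{G}}^{-T}q$ is in hand; the only delicate points are bookkeeping ones. I would be careful that the transposes are tracked correctly — because Proposition~\ref{prop:absprob} pairs $p_j$ with $\mat{G}_{ji}$, the governing matrix is $\widehat{\mat{G}}^{T}$ rather than $\widehat{\mat{G}}$, so every occurrence of the inverse must carry the transpose. I would also state explicitly that $\widehat{\delta\mat{G}}$ denotes the restriction of $\delta\mat{G}$ to the indices $\set{Y}\setminus\set{T}$, matching the restriction defining $\widehat{\mat{G}}$. The one substantive hypothesis is the invertibility of $\widehat{\mat{G}}$, which is precisely the transiency assumption $\lim_{t\to\infty}e^{t\widehat{\mat{G}}}=0$ inherited from Proposition~\ref{prop:absprob}; this both places us on an open set where the derivative is defined and validates the inverse-derivative identity. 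Beyond these transpose and restriction conventions, no real obstacle arises.
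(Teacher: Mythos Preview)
Your proposal is correct and follows exactly the approach the paper indicates: the paper simply states that $\hat{p}=-\widehat{\mat{G}}^{-T}q$ and that the lemma follows ``by elementary calculus,'' and you have spelled out precisely that elementary calculus via the product rule and the standard derivative-of-inverse identity. Your attention to the transpose convention and the invertibility hypothesis is appropriate and matches the paper's setup.
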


\medskip\noindent$\bullet$ $p\mapsto f(b)$\\
Finally, the first summand of $f(b)$ is linear in $p$, thus differentiable; and the second summand is clearly differentiable w.r.t.\ $b$.

\paragraph{Summary: optimization procedure.}
Under the assumptions made in this section the objective function $f$ is differentiable, and we sum up the optimization procedure by the gradient method as an algorithm.

\noindent\textit{Initialization:}\\
Let some partition $\{\set{X}_1,\ldots,\set{X}_n\}$ of~$\set{X}$ be given. Choose some $b_0\in\R^r$ such that $\set{T}\subsetneq\set{D}(b_0)$, a tolerance threshold $\mathrm{TOL}>0$, and step sizes $\gamma_0,\gamma_1,\ldots$. Set
\[
f(b) = \sum_{i=1}^nm(\set{X}_i)p_i - \alpha|b|^2.
\]
\textit{For $k=0,1,\ldots$ perform the following steps:}
\begin{enumerate}
	\item Compute $Df(b_k)$.
	\item STOP if $|Df(b_k)|<\mathrm{TOL}$.
	\item Set $b_{k+1} = b_k + \gamma_k Df(b_k)$.
\end{enumerate}

\begin{remark}
There are methods exploiting smoothness with higher performance than the gradient method (e.g.\ the Gau{\ss}--Newton method), however it is not clear at the first sight if the mapping $v\mapsto\mat{G}$ is differentiable more than once. Also, there are even more sophisticated first order methods than the simple gradient method~\cite{Nest83}. Using these methods for the optimization of our problem will be the subject of future work.
\end{remark}

\subsection{Minimal absorption times}	\label{ssec:optimTime}

We have already discussed at the end of Section~\ref{ssec:absProb} that the expected absorption times,~$a_n$, computed from~$\mat{G}_n$ may be inadequate for approximating~$\tau$. We also noted that~$a_{n,i}=t_{n,i}$ if~$p_{n,i}=1$, hence we expect~$t_n$ to be a good approximation of~$\tau$ on~$\set{D}$. Under the same assumptions as in Section~\ref{ssec:optim}, and by using the short-hand notation~$i\in\set{D}_0$ for~$\set{X}_i\subset\set{D}_0$, we aim to minimize the objective function
\[
f_n(b):= \sum_{i\in\set{D}_0}m(\set{X}_i) t_{n,i} + \alpha|b|^2
\]
for some prescribed region~$\set{D}_0$. Just as above, we drop the subscript~$n$ for simplicity.

The strategy is analogous to the one in the previous section: we establish the differentiability of~$f$ w.r.t.~$b$, compute the derivative~$Df(b)$, and apply the simple gradient descent method
\[
b_{k+1} = b_k - \gamma_k Df(b_k),\qquad k=0,1,\ldots
\]
to compute a local minimum of the objective function.

The differentiability of~$f$ is proven along the same lines as in the previous section, except for $Dt(\mat{G})$. Denote by $\hat{t}$ the vector with entries~$t_i$ whith~$i\in\set{X}\setminus\set{T}$.
\begin{lemma}
The directional derivative of $\hat{t}(\mat{G})$ in the direction $\delta\mat{G}$ is given by
\[
D\hat{t}(\mat{G})\cdot\delta\mat{G} = \widehat{\mat{G}}^{-T}\widehat{\delta\mat{G}}^{T}\widehat{\mat{G}}^{-T}e,
\]
where~$e=(1,\ldots,1)^T$.\\
Further holds $Dt_i(\mat{G})=0$ for~$i\in\set{T}$.
\end{lemma}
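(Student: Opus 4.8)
The plan is to mirror the computation of the preceding lemma for $D\hat{p}$, exploiting that $\hat{t}$ also solves a linear system whose coefficient matrix is $\widehat{\mat{G}}$. First I would rewrite Proposition~\ref{prop:termtimes} in matrix form: the relations $\sum_{j\in\set{Y}\setminus\set{T}} t_j\mat{G}_{ji}=-1$ for $i\in\set{Y}\setminus\set{T}$ read $\widehat{\mat{G}}^{T}\hat{t}=-e$, where $e=(1,\ldots,1)^T$. Under the standing transiency assumption $\lim_{t\to\infty}e^{t\widehat{\mat{G}}}=0$ the matrix $\widehat{\mat{G}}$ has only eigenvalues with strictly negative real part and is therefore invertible, so we obtain the closed form
\[
\hat{t}(\mat{G}) = -\widehat{\mat{G}}^{-T}e.
\]

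The decisive structural observation is that, in contrast to the absorption-probability case $\hat{p}=-\widehat{\mat{G}}^{-T}q$, the right-hand side here is the fixed vector $-e$, which does not depend on $\mat{G}$. Consequently no term analogous to $-\widehat{\mat{G}}^{-T}\delta q$ appears, and only the dependence through the matrix inverse survives. I would then differentiate using the elementary identity $D(A^{-1})\cdot\delta A = -A^{-1}(\delta A)A^{-1}$, valid for smooth matrix-valued curves through an invertible $A$. Taking $A=\widehat{\mat{G}}^{T}$ and noting that the directional derivative of $A$ in direction $\delta\mat{G}$ is $\widehat{\delta\mat{G}}^{T}$, this yields $D(\widehat{\mat{G}}^{-T})\cdot\delta\mat{G}=-\widehat{\mat{G}}^{-T}\widehat{\delta\mat{G}}^{T}\widehat{\mat{G}}^{-T}$, and hence
\[
D\hat{t}(\mat{G})\cdot\delta\mat{G} = -\big(D\widehat{\mat{G}}^{-T}\cdot\delta\mat{G}\big)e = \widehat{\mat{G}}^{-T}\widehat{\delta\mat{G}}^{T}\widehat{\mat{G}}^{-T}e,
\]
which is the claimed expression. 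For $i\in\set{T}$ the termination time is identically $t_i\equiv0$ by Proposition~\ref{prop:termtimes}, so its derivative vanishes, giving $Dt_i(\mat{G})=0$.

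Finally I would remark on smoothness, which is what legitimizes all the manipulations: since the entries of $\widehat{\mat{G}}^{-1}$ are rational functions of the entries of $\widehat{\mat{G}}$ with nonvanishing denominator (the determinant) on the open set of invertible matrices, the map $\mat{G}\mapsto\hat{t}$ is arbitrarily smooth there, and the directional derivative computed above is genuine. I do not expect any real obstacle here; the only points that need care are the bookkeeping of the transpose (the linear system is stated with $\mat{G}_{ji}$, so the coefficient matrix is $\widehat{\mat{G}}^{T}$ rather than $\widehat{\mat{G}}$) and the justification that $\widehat{\mat{G}}$ is invertible, both of which are immediate from the assumptions already in force.
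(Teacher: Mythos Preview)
Your argument is correct and follows exactly the paper's approach: write $\hat{t}=-\widehat{\mat{G}}^{-T}e$ from Proposition~\ref{prop:termtimes} and differentiate, then use $t_i\equiv 0$ for $i\in\set{T}$ to conclude the second claim. The paper's own proof is just a terser version of precisely this computation.
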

\begin{proof}
We have from~\eqref{eq:termtimes} that $\hat{t} = -\widehat{\mat{G}}^{-T}e$. Differentiation w.r.t.\ $\mat{G}$ yields the first claim. The second follows from $t_i(\mat{G})\equiv 0$ for~$i\in\set{T}$.
\end{proof}

Note that unless there is a~$b$ such that~${\set{D}_0\subseteq\set{D}(b)}$, we cannot expect the objective function to obtain finite values, since $\tau(x;b)=\infty$ for~$x\notin\set{D}(b)$. To exclude such a case we assume that we already start the optimization with a~$b_0$ such that~$\set{D}_0\subseteq\set{D}(b_0)$.\footnote{We may find such a $b_0$ by taking the maximization problem from Section~\ref{ssec:optim} with the objective function $f(b)=\int_{\set{D}_0}\chi_{\set{D}(b)}-\alpha|b|^2$ for some very small $\alpha>0$, with its discrete counterpart $f(b)=\sum_{i\in\set{D}_0}m(\set{X}_i)p_i(b)-\alpha|b|^2$.} Further, we have to assure that none of the $b_k$ is such that~$\set{D}_0\nsubseteq\set{D}(b_k)$. Observe that~$\set{D}_0\subseteq\set{D}(b_k)$ is equivalent with
\[
g(b):= \sum_{i\in\set{D}_0}m(\set{X}_i)p_i = \sum_{i\in\set{D}_0}m(\set{X}_i),
\]
if $\set{D}_0$ is the union of partition elements; what we assume from now on. Thus, we have to assure that the sequence $\{g(b_k)\}_{k=0,1,\ldots}$ is constant, in particular non-decreasing. If the increment $\Delta b_k := Df(b_k)$ in the iteration of the gradient method is small enough, we have
\[
g(b_{k+1})-g(b_k) \approx Dg(b_k)\cdot\Delta b_k,
\]
and the condition $Dg(b_k)\cdot\Delta b_k \ge 0$ assures that the above sequence is essentially non-decreasing. Note that the computation of~$Dg$ can be done similarly to that of~$Df$ in Section~\ref{ssec:optim}. So if ${Dg(b_k)\cdot\Delta b_k}<0$, we use the projection of~$\Delta b_k$ onto~$Dg(b_k)^{\perp}:=\{x\in\R^r\,\vert\, x\cdot Dg(b_k)=0\}$ as increment.

\paragraph{Summary: optimization procedure.}
Under the assumptions made in this section the objective function $f$ is differentiable, and we sum up the optimization procedure by the gradient descent method as an algorithm.

\noindent\textit{Initialization:}\\
Let some partition $\{\set{X}_1,\ldots,\set{X}_n\}$ of~$\set{X}$, and some~$\set{D}_0\supset\set{T}$ be given. Choose some $b_0\in\R^r$ such that $\set{D}_0\subsetneq\set{D}(b_0)$, and a tolerance threshold $\mathrm{TOL}>0$. Set
\[
f(b) = \sum_{i\in\set{D}_0}m(\set{X}_i)t_i + \alpha|b|^2.
\]
\textit{For $k=0,1,\ldots$ perform the following steps:}
\begin{enumerate}
	\item Compute $\Delta b_k:=Df(b_k)$ and $Dg(b_k)$.
	\item STOP if $|\Delta b_k|<\mathrm{TOL}$.
	\item IF $Dg(b_k)\cdot \Delta b_k<0$, set \[ \Delta b_k := \Delta b_k - \frac{\Delta b_k\cdot Dg(b_k)}{|Dg(b_k)|^2}Dg(b_k).\]
	\item Set $b_{k+1} = b_k - \Delta b_k$.
\end{enumerate}

\begin{remark}
There are more established methods for constrained optimization than the one we use here. Their application, however, is beyond the scope of this work.
\end{remark}

\subsection{Parameter-affine systems}	\label{ssec:linear}

Here we consider the model problems for the case where $v(\cdot;b)$ is affine-linear in $b$; i.e.
\[
v(x;b) = v_0(x) + v_c(x;b),
\]
with $v_c(\cdot;b)$ being linear in $b$. Take for example
\[
v(x;b) = v_0(x) + v_c(x)\mat{B}x,
\]
where $v_0$ and the columns of $v_c$ are vector fields from $\set{X}$ to $\R^d$, and $\mat{B}$ is obtained from $b$ by reshaping the vector to a matrix.

For this class of vector fields an additional approximation step may save a huge amount of computational efforts. Let $e_i\in\R^r$, $i=1,\ldots,r$, be such that $e_{i,i}=1$ and $e_{i,j}=0$ for $j\neq i$. Then, by linearity,
\[
v(x;b) = v_u(x) + \sum_{i=1}^rb_i v_c(x;e_i),
\]
and we use
\begin{equation}
\mat{G}_n^*(v):=\mat{G}_n(v_0)+\sum_{i=1}^r |b_i|\mat{G}_n\left(\mathrm{sign}(b_i)v_c(\cdot;e_i)\right)
\label{eq:discretizationLinear}
\end{equation}
instead of $\mat{G}_n(v)$ in our computations.\footnote{Note that in general $\mat{G}_n(v)$ is not linear in $v$, but it approximates the dynamics in a distributional sense, cf.\ above, hence we expect it to behave linearly ``in the limit'', i.e.\ if $n$ is large enough. Any further discussion on the topic would lead beyond the scope of this paper, involving semigroups of transfer operators associated with flows; we refer the reader to~\cite{FroJK10}.} The reason why $\mat{G}_n^*(v)$ is not simply a linear combination is the fact that linear combination of generator matrices does not have to be a generator matrix. Conical combination of generator matrices, however, is a generator matrix. Unfortunately, differentiability of~$\mat{G}_n^*(v)$ w.r.t.~$b_i$ at~${b_i=0}$ is not guaranteed any more.

The usage of $\mat{G}_n^*(v)$ has the advantage that the computationally expensive steps of computing the generator matrix for a vector field have to be done only once, at the beginning $(2r+1)$ times, and not for all iterates~$b_k$ in the gradient method. This brings a massive speed-up in runtime at the expense of small loss in accuracy.

\section{Numerical examples} \label{sec:numexp}

\subsection{Example: Domain of attraction maximization}

Consider the two dimensional dynamical system given by
\begin{equation}
\dot{\begin{pmatrix}	x_1 \\ x_2 \end{pmatrix}}
= \underbrace{3(x_1^2+x_2^2)
\begin{pmatrix}
	x_1+2x_2+3x_2^2-50x_2^4 \\
	2x_1+3x_1^2+x_2
\end{pmatrix}}_{=v_u(x)}
+
\underbrace{\begin{pmatrix}
	-1 & 0\\
	0 & -1-2x_2
\end{pmatrix}
\mat{B}\begin{pmatrix} x_1\\ x_2 \end{pmatrix}}_{=v_c(x;b)},
\label{eq:systemE}
\end{equation}
with saturation condition ${\|v_c(x;b)\|_{\infty}\le 0.3}$. Here $b$ is the parameter vector containing the entries of the parameter matrix ${\mat{B}\in\R^{2\times 2}}$. We chose the state space to be ${\set{X}=[-1,1]\times[-1,1]}$, and the target region to be ${\set{T}=[-0.05,0.05]\times[-0.05,0.05]}$.

The goal is to maximize ${\set{D}(b)-\alpha\|b\|_2^2}$, with $\alpha=0.02$. For this we use the method described in Section~\ref{ssec:optim} (see Remark~\ref{rem:zeroMeasureSet} on how to handle the above saturation condition) with step sizes $\gamma_k=3$. Note that the linearization of system~\eqref{eq:systemE} around the origin yields $\dot \xi=-\mat{B}\xi$. We start the iteration with $\mat{B}_0=\begin{pmatrix}1 & 1\\ 0 & 1\end{pmatrix}$, and do 15 gradient steps.

In Figure~\ref{fig:DOAs} we show the absorption probabilities corresponding to the system with parameter matrix $\mat{B}_{15}$ (for the one computed with the gradient method for the particular partition, respectively), from left to right for a $64\times 64$, a $128\times 128$ and a $256\times 256$ uniform partition of the state space.
\begin{figure}[htb]
	\centering	\includegraphics[width=0.325\textwidth]{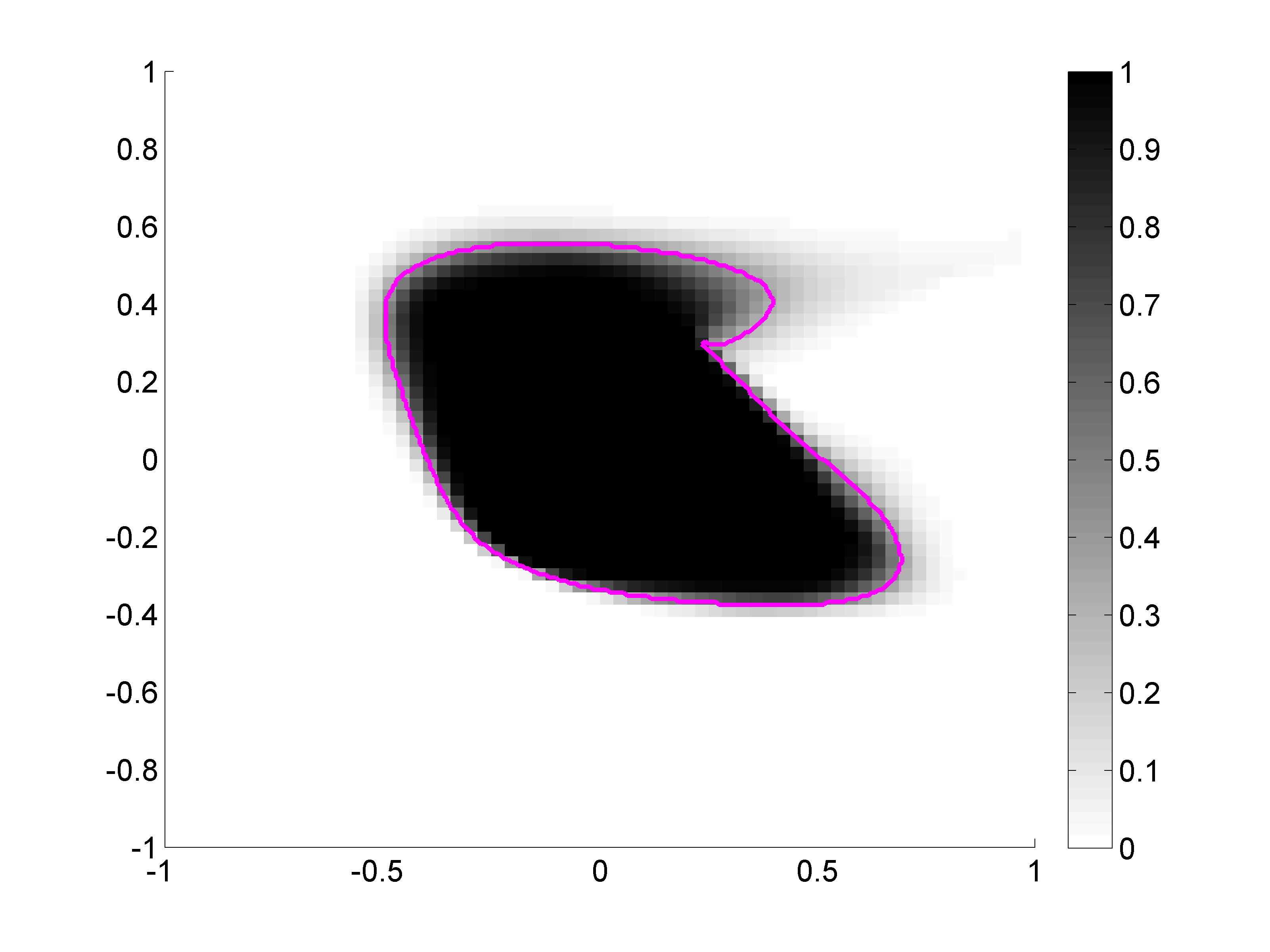} \hfill
	\includegraphics[width=0.325\textwidth]{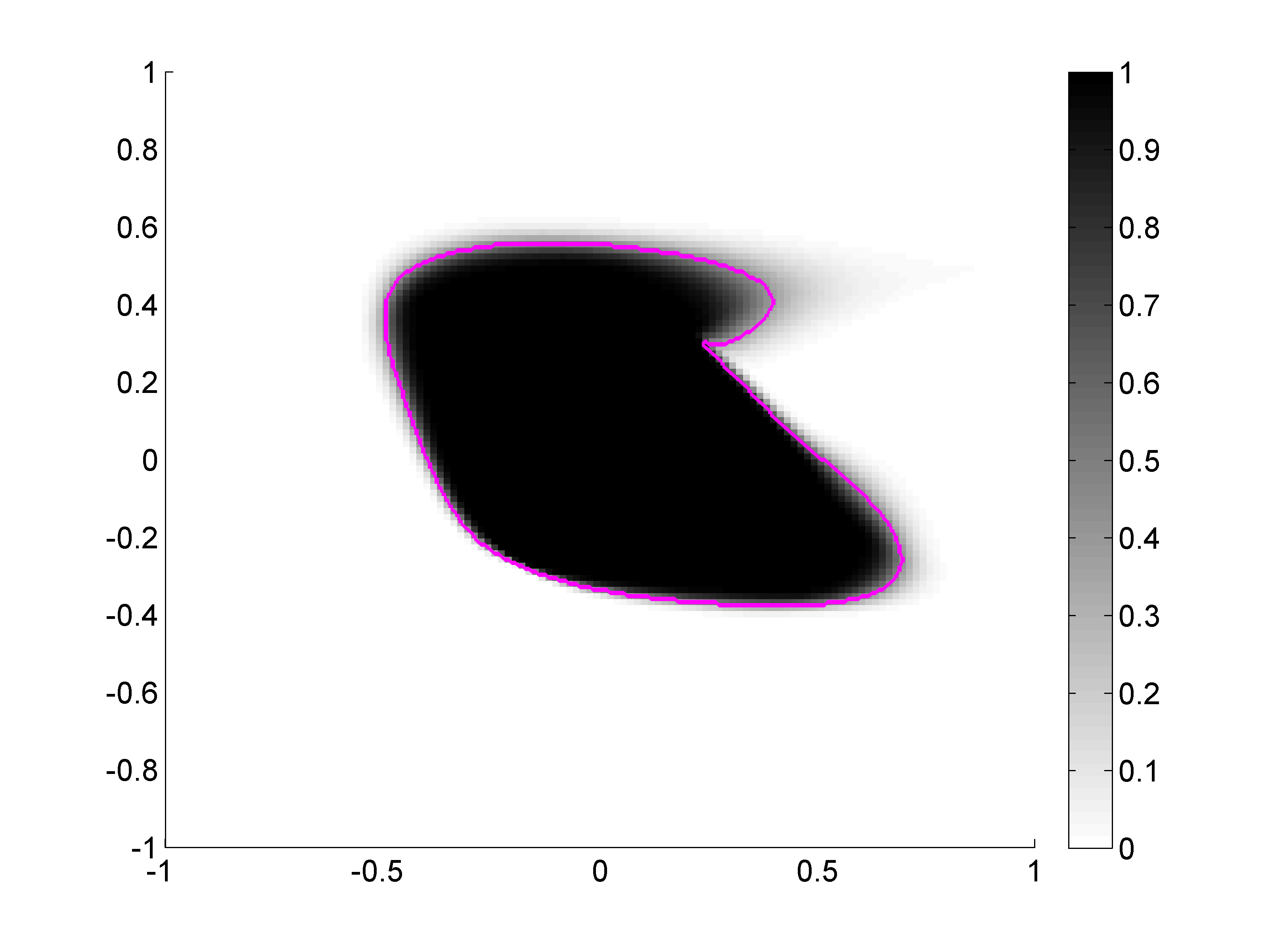} \hfill
	\includegraphics[width=0.325\textwidth]{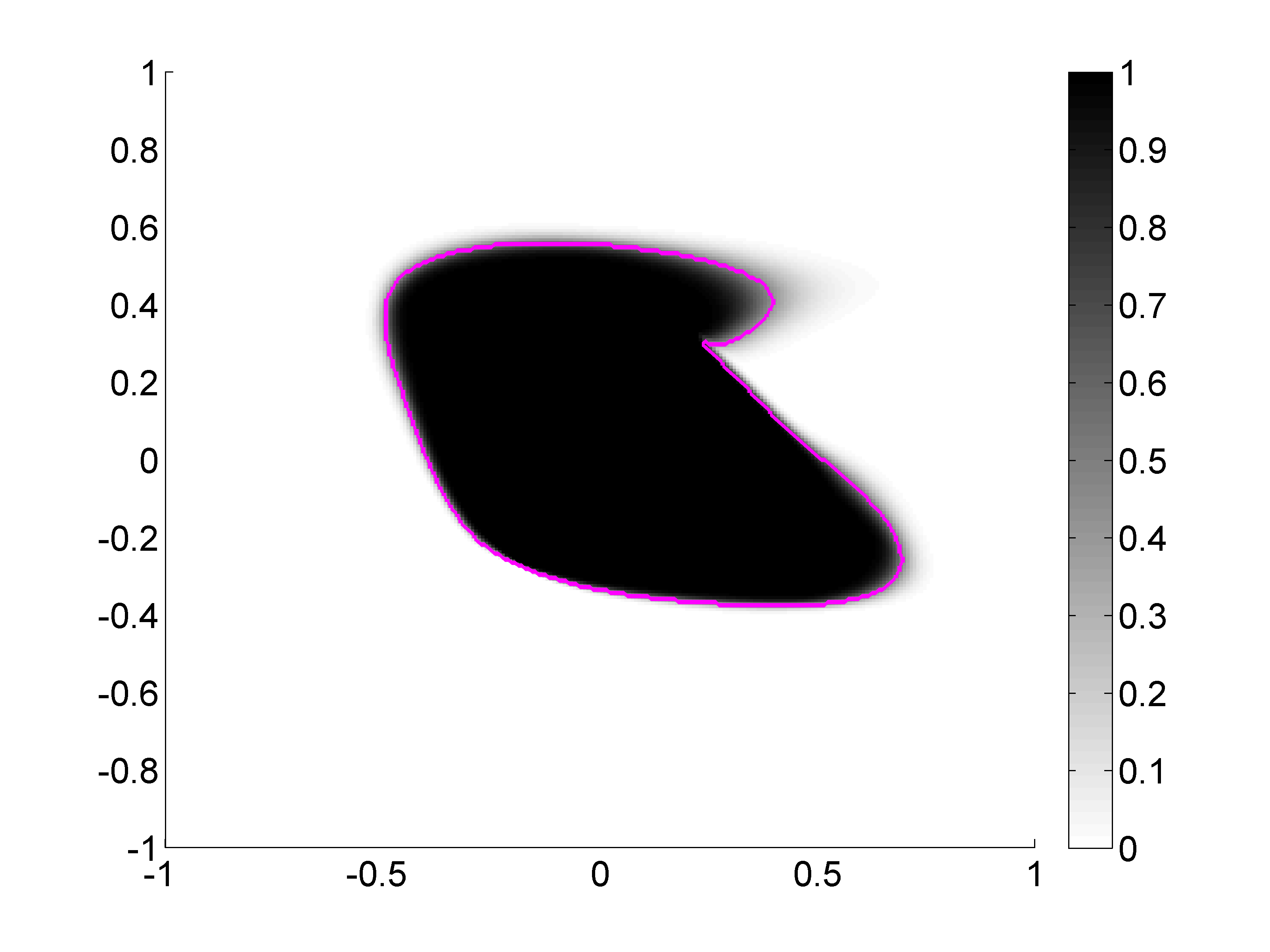}
	\caption{The absorption probabilities corresponding to the parameter $b_{15}$ for a $64\times 64$ (left), a $128\times 128$ (center) and a $256\times 256$ (right) uniform partition of the state space. The contour indicates the border of the DOA computed by direct simulation.}
	\label{fig:DOAs}
\end{figure}

Table~\ref{tab:objectivesDOA} shows the change in the objective function after 15 gradient steps for the different partitions. For all three discretizations ${\|Df(b_{15})\|_2 < 10^{-3}}$. The objective function increased by 15\% compared with the naive initial parameter array computed from linearization.
\begin{table}[htbp]
	\centering
	\renewcommand{\arraystretch}{1.5}
		\begin{tabular}{c|ccc}
			& $64\times64$ & $128\times 128$ & $256\times256$ \\ \hline
		$f(b_0)$ & 0.5888 & 0.5970 & 0.6026 \\
		$f(b_{15})$ & 0.6768 & 0.6862 & 0.6924
		\end{tabular}
	\caption{Initial and final values of the objective function for the three different partitions.}
	\label{tab:objectivesDOA}
\end{table}

There is no reason to expect the objective function to have only one local minimum (which will be the global one as well). Thus we applied the algorithm to different initial parameter values. The resulting sequence, however, converged always to the same optimum.

To save computational efforts one could start an optimization with a coarse resolution, and successively change to a finer partition at some appropriate iteration step.

\subsection{Example: Absorption time minimization}

Consider system~\eqref{eq:systemE} with the same saturation condition as above. Let ${\set{X}=[-1,1]\times[-1,1]}$, \linebreak[4] ${\set{T}=[-0.03,0.03]\times[-0.03,0,03]}$ and ${\set{D}_0=\left\{x\in\R^2\,\big\vert\,|x|\le 0.3\right\}}$.

The goal is to minimize
\[
f(b) = \int_{\set{D}_0}\tau(x)\,dx + \alpha |b|^2,
\]
with $\alpha=0.02$. For this we use the method described in Section~\ref{ssec:optimTime} with step sizes $\gamma_k=3$, and 15 steps. We start the iteration at the optimal parameter value computed for the maximal DOA above, which is~${\mat{B}_0=\begin{pmatrix}0.89 & 0.35\\ 0.75 & 1.4\end{pmatrix}}$.

In Figure~\ref{fig:times} we show the optimal absorption times computed with the gradient descent method for three different partitions. In every case the optimum seems to occur at the boundary of the feasible region~${\left\{b\,\big\vert\, \set{D}_0\subset \set{D}(b)\right\}}$. This matches well with the fact that the boundary of~$\set{D}_0$ (indicated by the circular contour in the figure) is very close to the boundary of the set with high absorption probabilities (the plotted boxes indicate absorption probabilities greater than $0.9$).
\begin{figure}[htb]
	\centering	\includegraphics[width=0.325\textwidth]{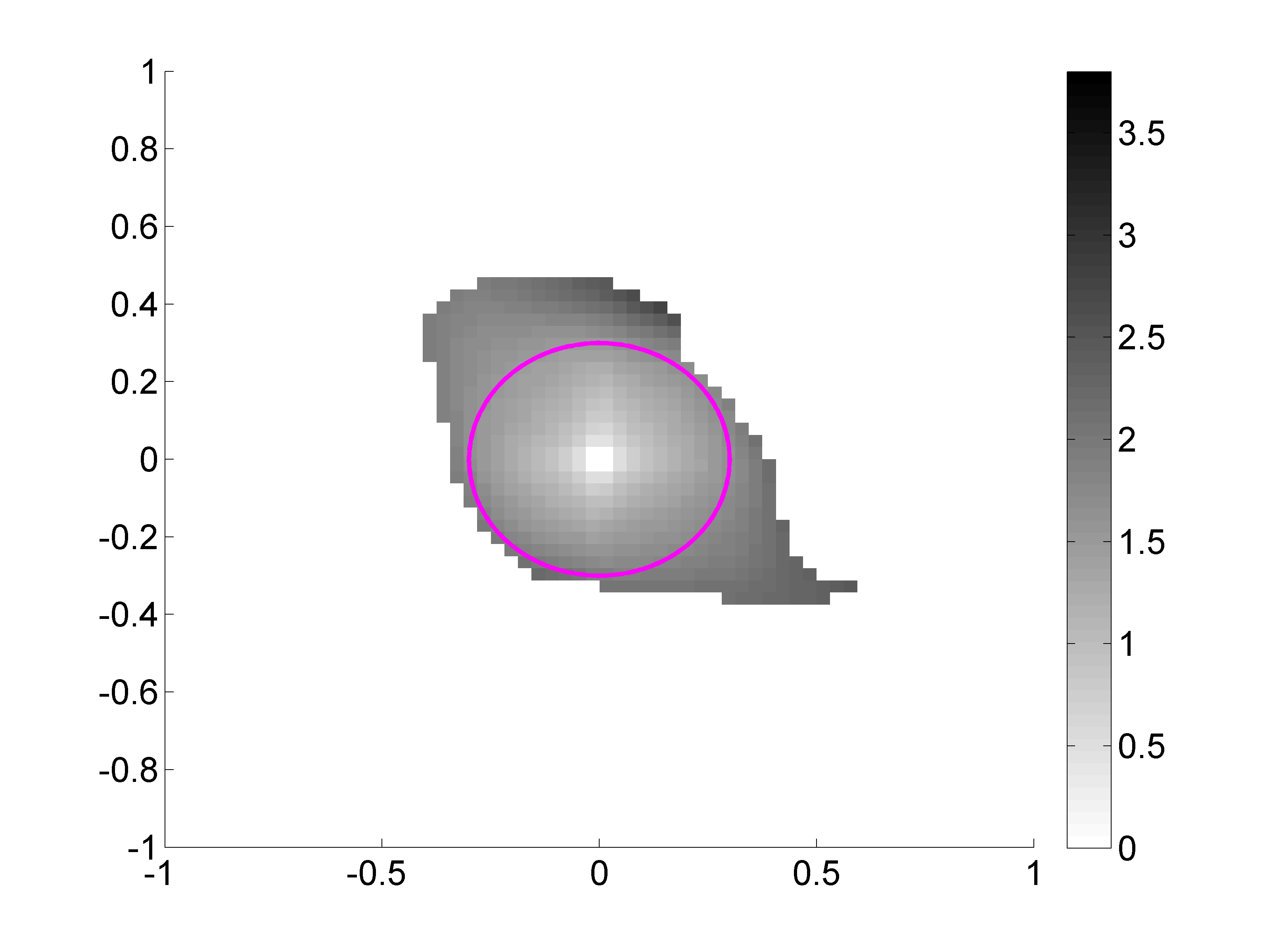} \hfill	\includegraphics[width=0.325\textwidth]{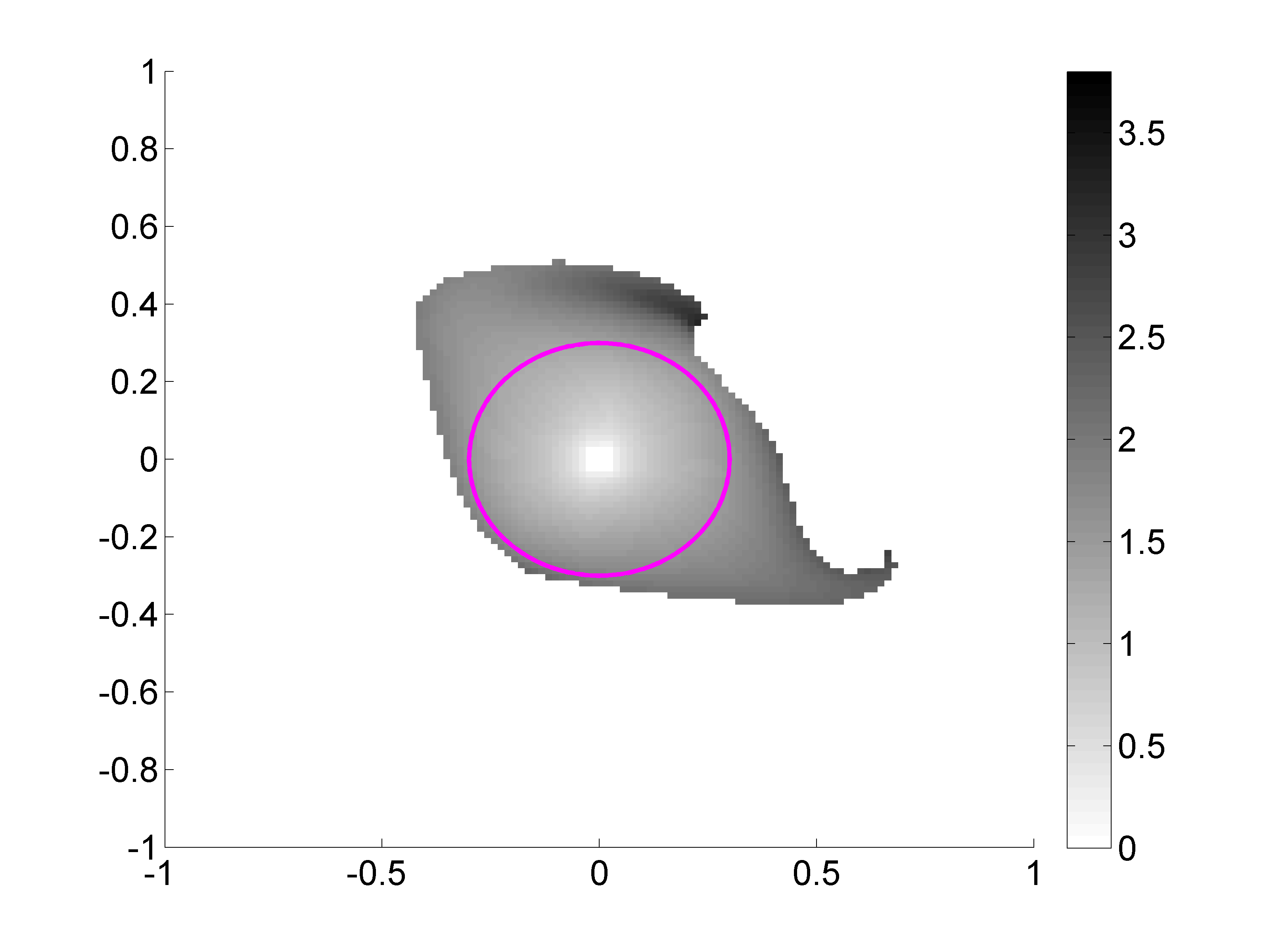} \hfill	\includegraphics[width=0.325\textwidth]{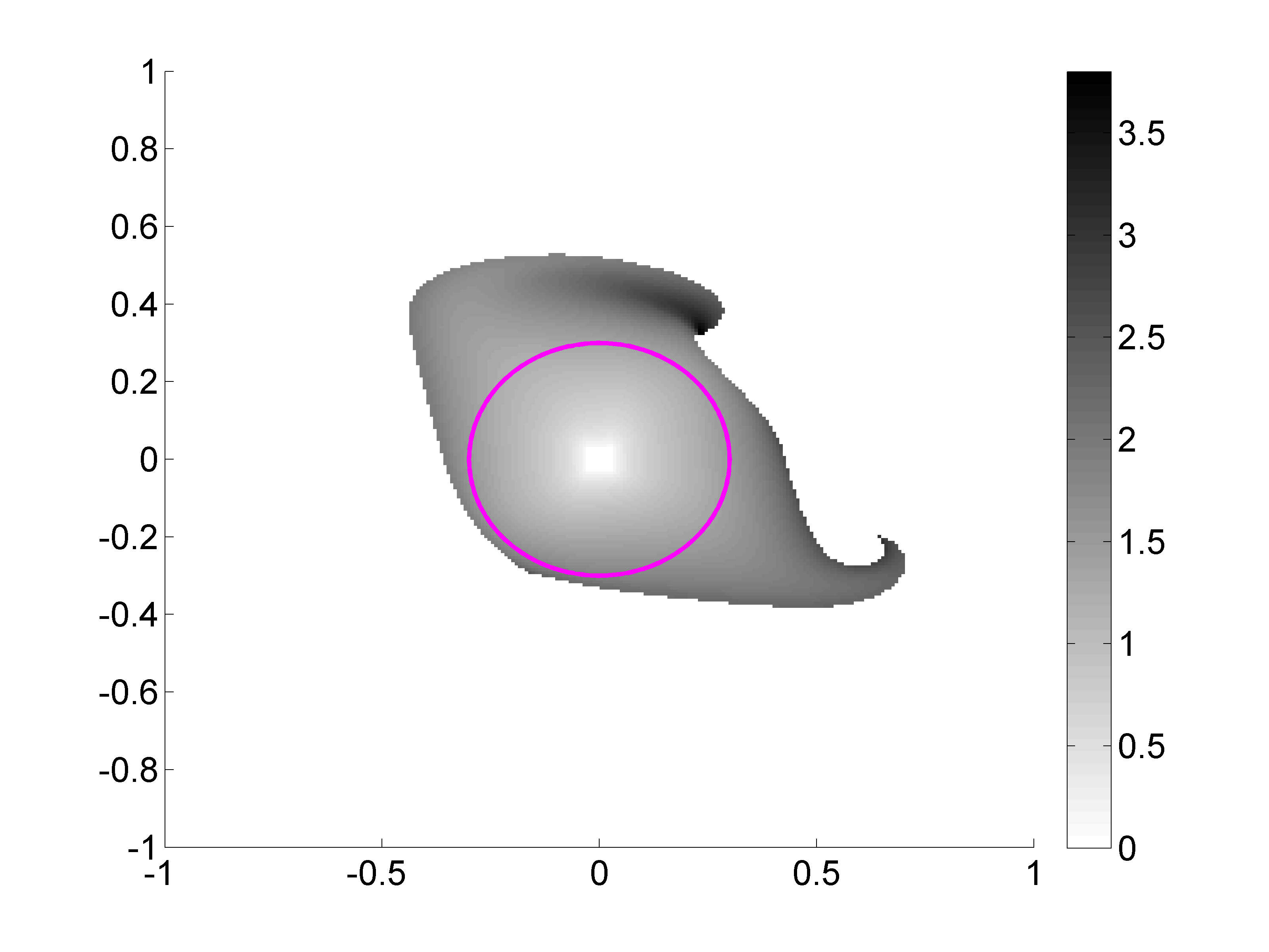}
	\caption{The absorption times corresponding to the parameter $b_{15}$ of the discrete generator computed on a $64\times 64$ (left), a $128\times 128$ (center) and a $256\times 256$ (right) uniform partition of the state space. Only those boxes have been plotted which have an absorption probability more than $0.9$. The contour indicates the region~$\set{D}_0$.}
	\label{fig:times}
\end{figure}

After 15 steps the gradients (more precisely, the gradients projected to $Dg(b_{k})^{\perp}$, since the iteration converges to the boundary of the feasible set; see Section~\ref{ssec:optimTime}) were for all discretizations smaller than $4\cdot 10^{-3}$. Table~\ref{tab:objectivesTime} shows the change in the objective function after 15 gradient steps for the different partitions.
\begin{table}[htb]
	\centering
	\renewcommand{\arraystretch}{1.5}
		\begin{tabular}{c|ccc}
			& $64\times64$ & $128\times 128$ & $256\times256$ \\ \hline
		$f(b_0)$ & 1.620 & 1.142 & 0.9418 \\
		$f(b_{15})$ & 0.5278 & 0.4750 & 0.4436
		\end{tabular}
	\caption{Initial and final values of the objective function for the three different partitions.}
	\label{tab:objectivesTime}
\end{table}
It is worth to note that for different starting points~$b_0$ the iteration may run into different (but very similar) local minima which all lie at the boundary of the feasible region~${\left\{b\,\big\vert\, \set{D}_0\subset \set{D}(b)\right\}}$. Also, for finer partitions these local minima are closer to each other. This suggests that the occurrence of multiple minima is only due to the finite discretization of the problem, and they all collapse to one minimum as the diameter of the partition elements tend to~0.

\subsection{Example: Affine parameter dependence}

We test the modification introduced in Section~\ref{ssec:linear} on the system
\begin{equation}
\dot{\begin{pmatrix}	x_1 \\ x_2 \end{pmatrix}}
= \underbrace{3(x_1^2+x_2^2)
\begin{pmatrix}
	x_1+2x_2+3x_2^2-50x_2^4 \\
	2x_1+3x_1^2+x_2
\end{pmatrix}}_{=v_u(x)}
+
\underbrace{\begin{pmatrix}
	-1 & 0\\
	0 & -0.1
\end{pmatrix}
\mat{B}\begin{pmatrix} x_1\\ x_2 \end{pmatrix}}_{=v_c(x;b)},
\label{eq:systemEmod}
\end{equation}
which has the desired affine dependence on the parameter array~$b$.

First we compute the discrete generators corresponding to the system~\eqref{eq:systemEmod} with parameter \linebreak[4] ${\mat{B}_0 = \begin{pmatrix}0.1 & 10\\0 & 15\end{pmatrix}}$ by the direct discretization~\eqref{eq:disc_gener} and by the modified discretization~\eqref{eq:discretizationLinear}, respectively. The state space is $[-1,1]\times[-1,1]$, and we apply a $40\times40$ uniform partition in both cases. Next, we compute the absorption probabilities for the target~${\set{T}=[-0.05,0.05]\times[-0.05,0.05]}$ using the one and then the other discretization. Figure~\ref{fig:LinVsStandard1} shows the results.
\begin{figure}[htb]
	\centering	\includegraphics[width=0.35\textwidth]{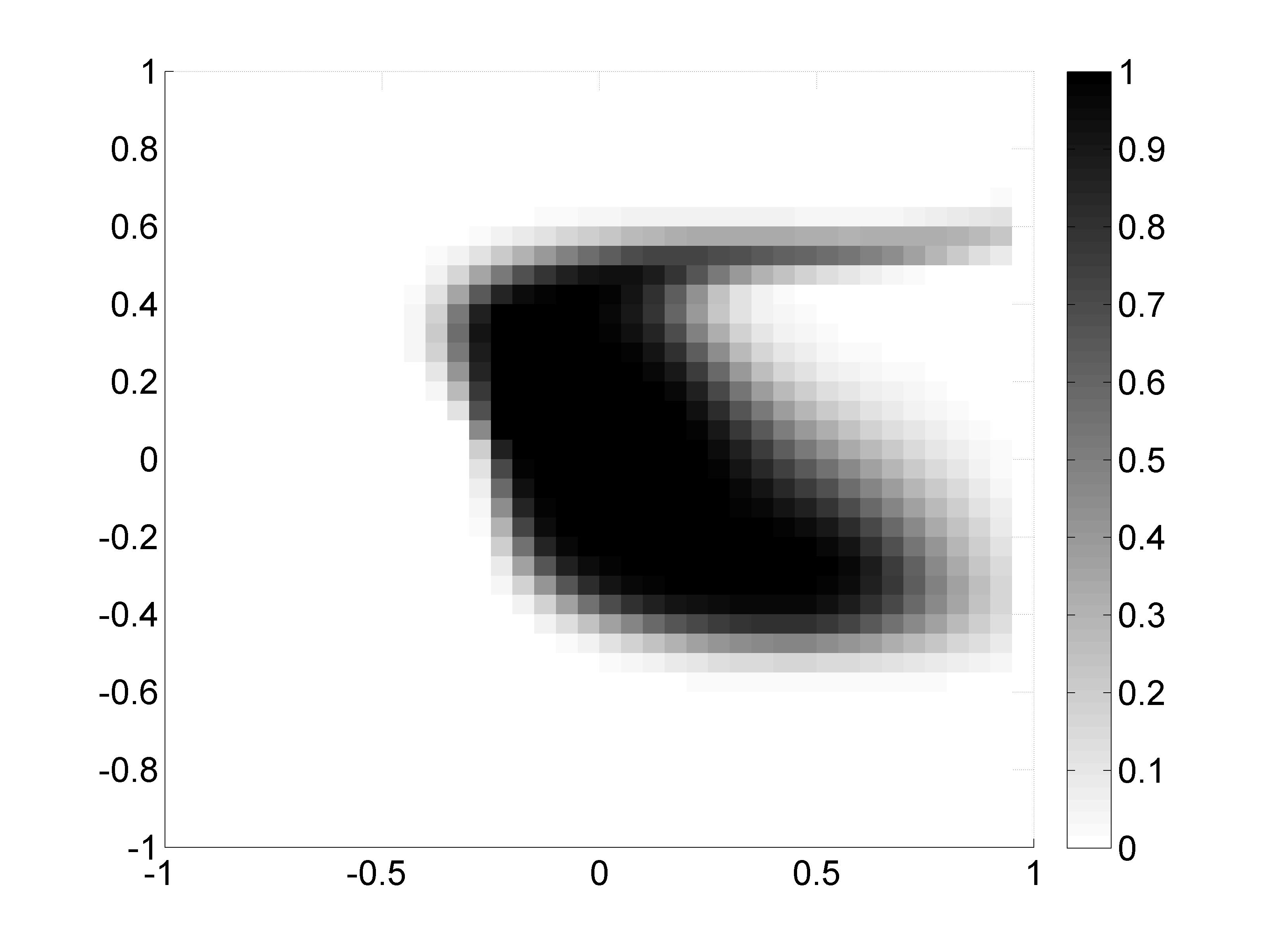}
\includegraphics[width=0.35\textwidth]{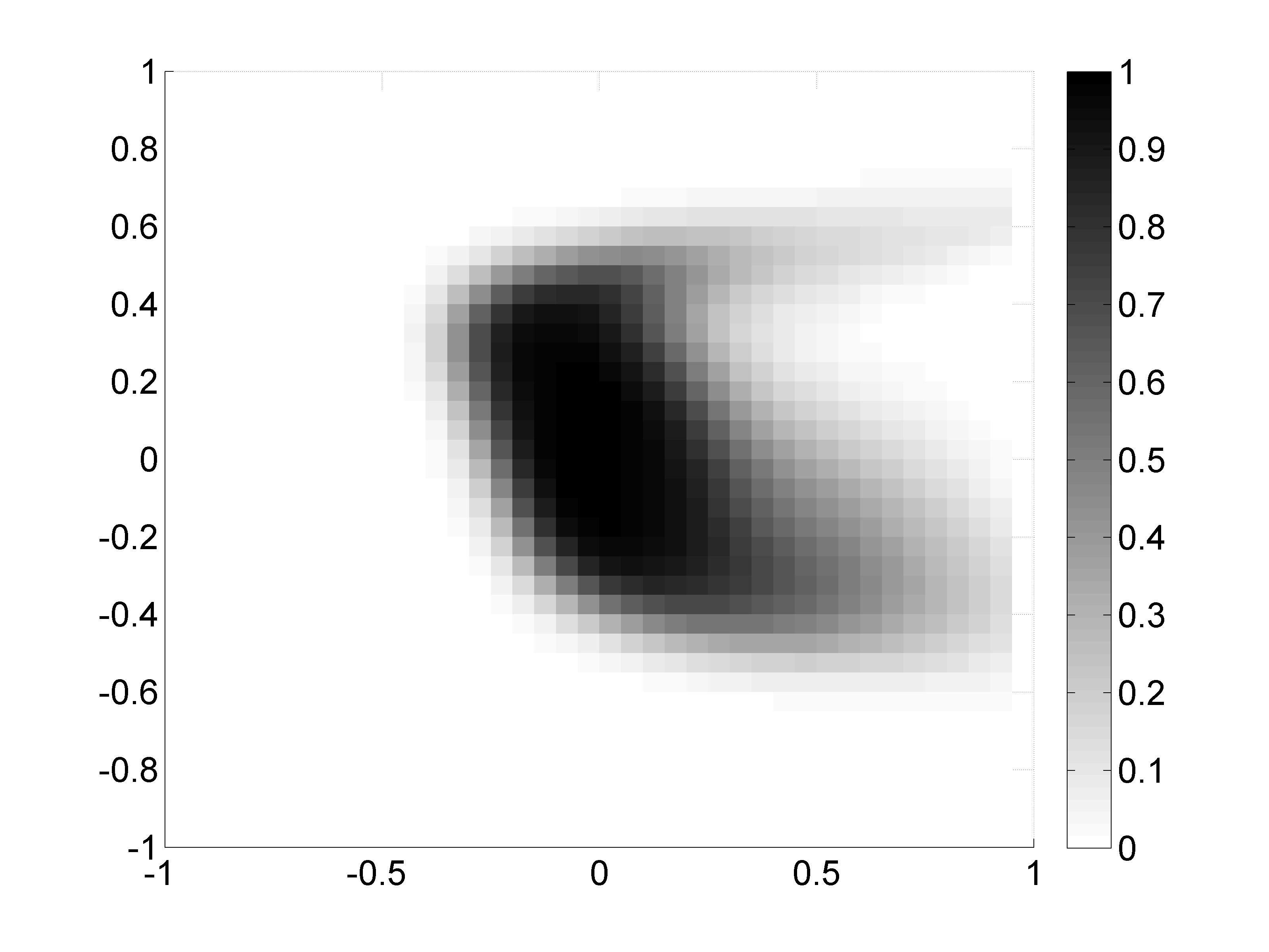}
	\caption{Absorption probabilities computed using the standard discretization (left) and the modified discretization~\eqref{eq:discretizationLinear} (right). The larger diffusivity of the modified discretization is reflected in the milder descent of absorption probabilities.}
	\label{fig:LinVsStandard1}
\end{figure}
To understand why does the modified discretization give more ``blurred'' absorption probabilities, note the following. The standard discretization takes a linear combination of some vector fields and computes the discrete generator from it. The modified method takes discrete generators of some vector fields and combines them linearly to one generator. This means for one element in the partition, that if the component vector fields~$v(\cdot,e_i)$ show locally in many different directions, the combined generator of the modified discretization will yield transition rates in many different neighboring partition elements. Hence, the generator from the modified discretization may have a larger ``diffusivity'', forcing absorption probabilities of neighboring boxes to be closer to each other. This is what we observe as blurring in the figure.

This diffusivity effect decreases as we use finer partitions. Applying 15 steps of the gradient method for the problem of maximizing ${\set{D}(b)-0.02|b|^2}$ with a ${256\times 256}$ partition and $\mat{B}_0$ as above, the difference in the objective function computed by the two discretizations is only $3.5\%$, and the corresponding absorption probabilities are shown in Figure~\ref{fig:LinVsStandard2}. Note that the modified method computes 9 generators at the beginning, and then no more for the whole iteration. Since in this example approximately 45 steps are needed that the gradient of the objective function falls under $10^{-3}$ (the step sizes are constant,~$\gamma_k=3$), the modified method has a notable advantage over the standard one.
\begin{figure}[htb]
	\centering \includegraphics[width=0.35\textwidth]{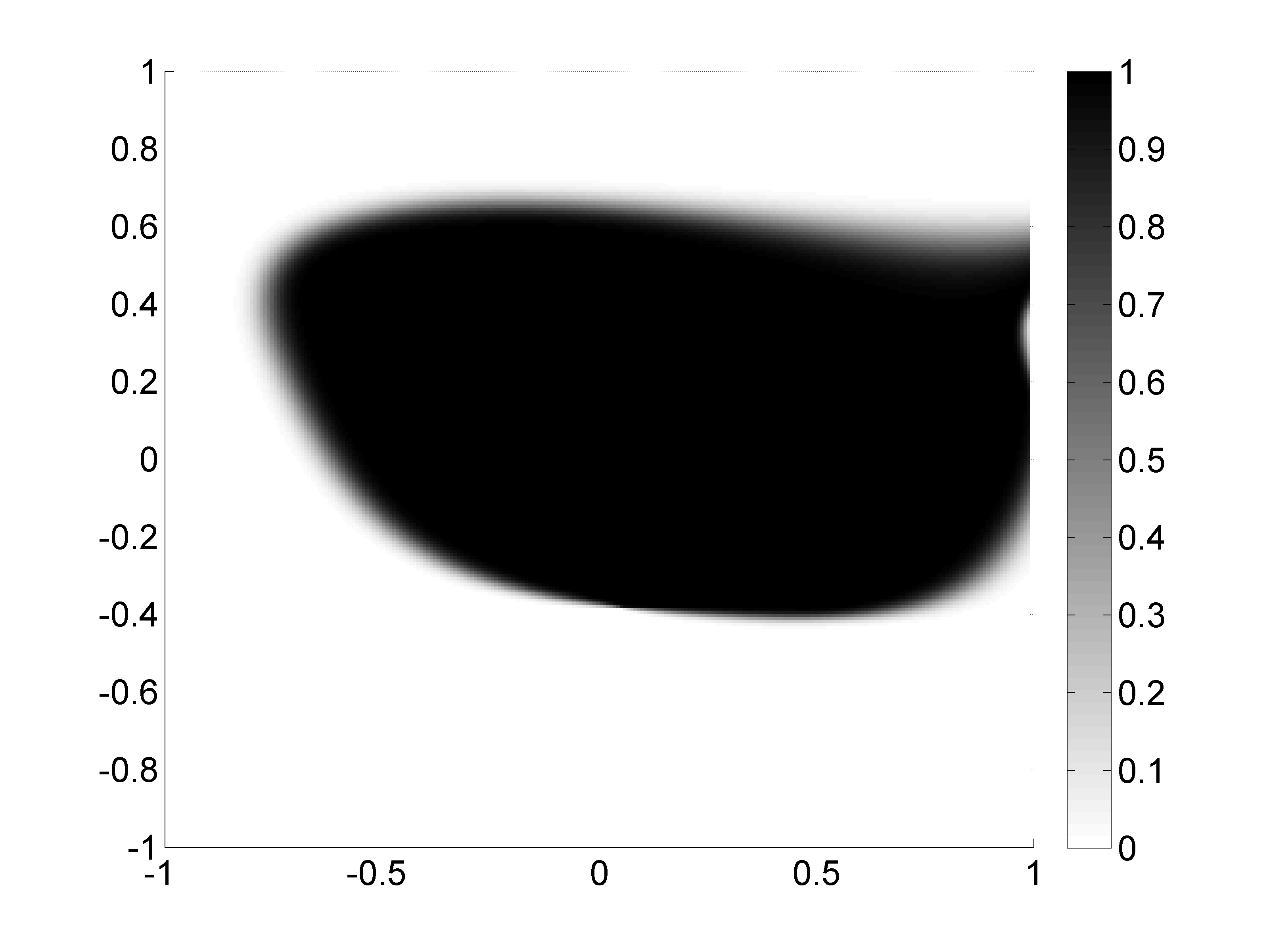}
\includegraphics[width=0.35\textwidth]{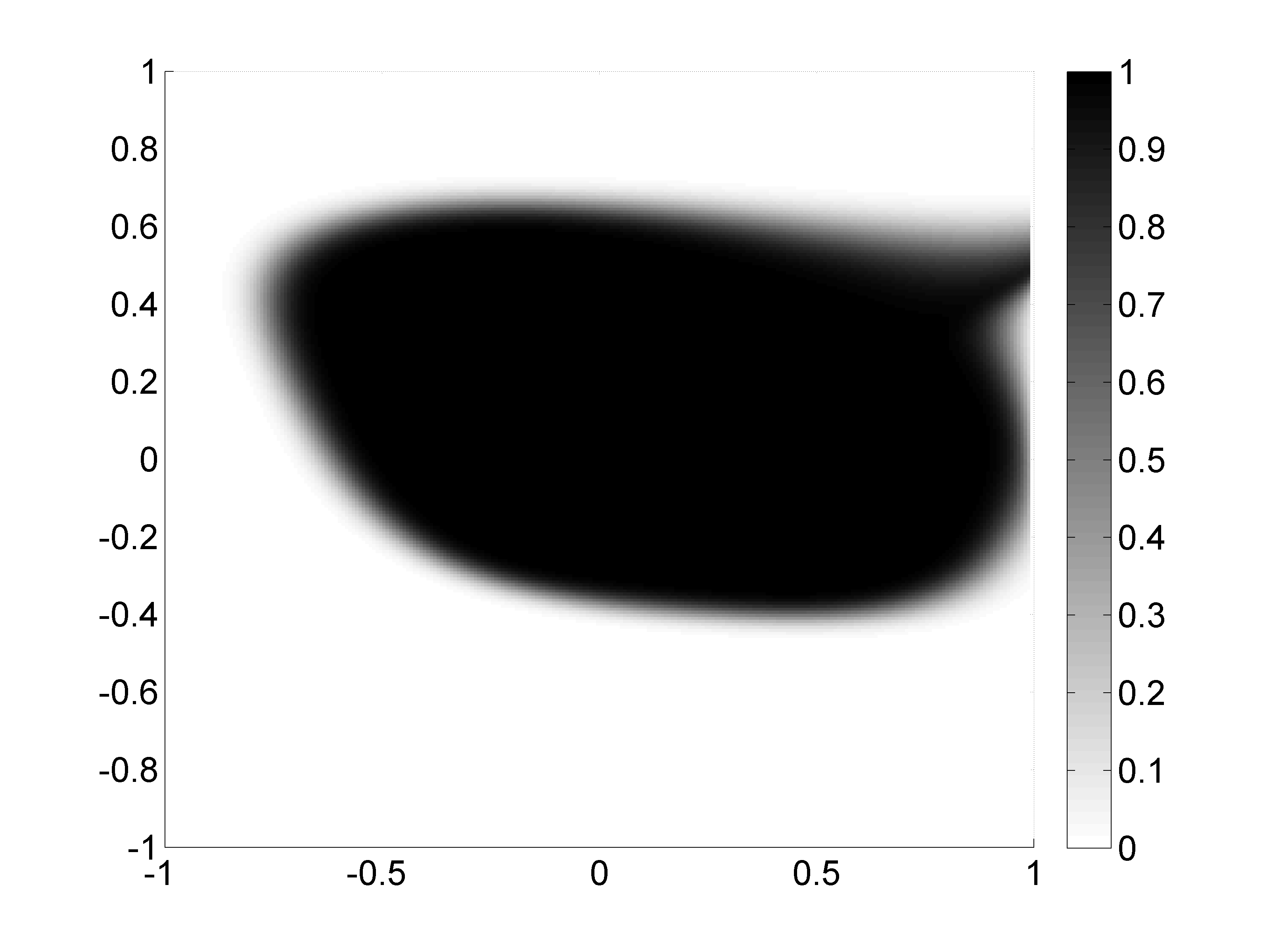}
	\caption{Absorption probabilities for system~\eqref{eq:systemEmod} with~$b_{15}$ computed from the standard discrete generator~$\mat{G}(v)$ (left) and  from the modified discrete generator~$\mat{G}^*(v)$ (right) on a $256\times256$ partition of $[-1,1]\times[-1,1]$. The corresponding objective functions differ only up to~$3.5\%$.}
	\label{fig:LinVsStandard2}
\end{figure}

\begin{remark}
If the state space is partitioned into $N$ elements (and if there are $\mathcal{O}(N)$ of them not belonging to the target), then the computation of the discrete generator has $\mathcal{O}(N)$ computational complexity, while the computation of absorption probabilities and times has $\mathcal{O}(N^3)$ (due to the solution of a system of linear equations by LU-decomposition). For the examples presented here the computation of the discrete generator was always by far the computationally most expensive step, however for finer partitions, and especially in higher system dimensions, the computational costs of the solution of the linear equations may dominate. These computations can be carried out more quickly by hierarchical GMRES techniques (see~\cite{KPPHD}, Section~5.7.4), so that the modified discretization for the case of linear dependence on the parameters still induces a considerable speed-up in the optimization against the standard discretization~\eqref{eq:disc_gener}.
\end{remark}

\section{Conclusions} \label{sec:conclusions}

We have proposed a method for the optimization of the global stability properties of time-continuous autonomous parameter-dependent systems. It uses an approximation of the deterministic dynamics by a Markov jump process (essentially the spatial discretization of the upwind method). The main computational properties of the method are that no trajectory simulation is needed, and that the computation of the objective function and of its derivative is simple and consist mostly of explicit steps. If the system is affine-linear in the parameters, a considerable speed-up is achieved by a slight modification.

A quantitative performance analysis of the method, in particular convergence statements, are subject of future work, just as the incorporation of adaptivity. The advantages of combining the basic idea with some more sophisticated optimization approaches are also to be investigated.


\section*{Acknowledgments}
The authors would like to thank Oliver Junge and Gary Froyland for helpful discussions.

\bibliography{KoVo11}

\begin{thebibliography}{Le{V}02}

\bibitem[DH97]{DeHo97}
Michael Dellnitz and Andreas Hohmann.
\newblock A subdivision algorithm for the computation of unstable manifolds and
  global attractors.
\newblock {\em Numerische Mathematik}, 75:293--317, 1997.

\bibitem[DJ98]{DeJu98}
Michael Dellnitz and Oliver Junge.
\newblock An adaptive subdivision technique for the approximation of attractors
  and invariant measures.
\newblock {\em Comput.\ Visual.\ Sci.}, 1:63--68, 1998.

\bibitem[DK71]{DaKu71}
E.~J. Davison and E.~M. Kurak.
\newblock A computational method for determining quadratic {L}yapunov functions
  for non-linear systems.
\newblock {\em Automatica}, 7(5):627 -- 636, 1971.

\bibitem[FG88]{FlGu88}
H.~Flashner and R.~S. Guttalu.
\newblock A computational approach for studying domains of attraction for
  non-linear systems.
\newblock {\em Int. J. Non-Linear Mech.}, 23(4):279--295, 1988.

\bibitem[FJK12]{FroJK10}
Gary Froyland, Oliver Junge, and P{\'e}ter Koltai.
\newblock Estimating long term behavior of flows without trajectory
  integration: the infinitesimal generator approach.
\newblock To appear in SIAM J.\ Numer.\ Anal. Preprint,
  \href{http://arxiv.org/abs/1101.4166}{arXiv:1101.4166}, 2012.

\bibitem[Gie09]{Gie09}
Peter Giesl.
\newblock On the determination of the basin of attraction of periodic orbits in
  three- and higher-dimensional systems.
\newblock {\em J. Math. Anal. Appl.}, 354:606--618, 2009.

\bibitem[GNW04]{Gol04}
Stefan Goldschmidt, Nicolai Neumann, and J{\"o}rg Wallaschek.
\newblock On the application of set-oriented numerical methods in the analysis
  of railway vehicle dynamics.
\newblock In {\em ECCOMAS 2004}, 2004.

\bibitem[Gr{\"u}01]{Gru01}
Lars Gr{\"u}ne.
\newblock Subdivision techniques for the computation of domains of attraction
  and reachable sets.
\newblock In {\em NOLCOS 2001}, pages 762--767, 2001.

\bibitem[GTV85]{GeTaVi85}
R.~Genesio, M.~Tartaglia, and A.~Vicino.
\newblock On the estimation of asymptotic stability regions: {S}tate of the art
  and new proposals.
\newblock {\em Automatic Control, IEEE Transactions on}, 30(8):747 -- 755, aug
  1985.

\bibitem[Hah67]{Hahn67}
Wolfgang Hahn.
\newblock {\em Stability of Motion}.
\newblock Springer-Verlag, Berlin, 1967.

\bibitem[HG80]{HsGu80}
C.~S. Hsu and R.~S. Guttalu.
\newblock An unravelling algorithm for global analysis of dynamical systems: an
  application of cell-to-cell mappings.
\newblock {\em ASME J. appl. Mech.}, 47(4):940--948, 1980.

\bibitem[Hsu80]{Hsu80}
C.~S. Hsu.
\newblock A theory of cell-to-cell mapping dynamical systems.
\newblock {\em SME J. appl. Mech.}, 47(4):931--939, 1980.

\bibitem[KD92]{KuDu01}
Harold~J.\ Kushner and Paul Dupuis.
\newblock {\em Numerical methods for stochastic control problems in continuous
  time}.
\newblock Springer-Verlag, New York, 2. edition, 1992.

\bibitem[Kol]{Kol10}
P{\'e}ter Koltai.
\newblock A stochastic approach for computing the domain of attraction without
  trajectory simulation.
\newblock Proceedings of the 8th AIMS International Conference on Dynamical
  Systems, Differential Equations and Applications, to appear.

\bibitem[Kol10]{KPPHD}
P\'eter Koltai.
\newblock {\em Efficient approximation methods for the global long-term
  behavior of dynamical systems -- Theory, algorithms and examples}.
\newblock PhD thesis, Technische Universit{\"a}t M{\"u}nchen, 2010.

\bibitem[Kus77]{Kush77}
Harold~{J.} Kushner.
\newblock {\em Probability Methods for Approximations in Stochastic Control and
  for Elliptic Equations}.
\newblock Academic Press, New York, 1977.

\bibitem[Le{V}02]{Lev02}
Randall~{J.} Le{V}eque.
\newblock {\em Finite Volume Methods for Hyperbolic Problems}.
\newblock Cambridge University Press, 2002.

\bibitem[LL61]{LaLe61}
J.~P. LaSalle and S.~Lefschetz.
\newblock {\em Stability by {L}iapunov's direct method with applications}.
\newblock Mathematics in science and engineering. Academic Press, 1961.

\bibitem[LL06]{Liaw06}
Der-Cherng Liaw and Ching-Hung Lee.
\newblock An approach to estimate domain of attraction for nonlinear control
  systems.
\newblock {\em Proceedings of the First International Conference on Innovative
  Computing, Information and Control}, 2006.

\bibitem[Nes83]{Nest83}
Yurii Nesterov.
\newblock A method for unconstrained convex minimization problem with the rate
  of convergence $o(1/k^2)$.
\newblock {\em Doklady AN SSSR (translated as Soviet {Math.} Docl.)},
  269:543--547, 1983.

\bibitem[Nor97]{Nor97}
James~{R.} Norris.
\newblock {\em Markov Chains}.
\newblock Cambridge Univ. Press, 1997.

\bibitem[SS75]{ShSt75}
D.~N. Shields and C.~Storey.
\newblock The behaviour of optimal {L}yapunov functions.
\newblock {\em International Journal of Control}, 21(4):561--573, 1975.

\bibitem[Wal98]{Walk98}
{D.}~{M.} Walker.
\newblock The expected time until absorption when absorption is not certain.
\newblock {\em J.\ Appl.\ Prob.}, 35:812--823, 1998.

\bibitem[Zub64]{Zub64}
V.~I. Zubov.
\newblock {\em Methods of A.M. Lyapunov and Their Application}.
\newblock P. Noordhoff, Groningen, 1964.

\end{thebibliography}
\bibliographystyle{alpha}

\end{document}